\renewcommand\emph[1]{\color{auburn}{{#1}}}
\definecolor{airforceblue}{rgb}{0.36, 0.54, 0.66}
\definecolor{auburn}{rgb}{0.43, 0.21, 0.1}
\definecolor{alizarin}{rgb}{0.82, 0.1, 0.26}
\numberwithin{equation}{section}
\crefname{section}{Section}{Sections}
\crefname{subsection}{Subsection}{Subsections}
\crefname{condition}{Condition}{Conditions}
\crefname{hypothesis}{Hypothesis}{Hypothesis}
\crefname{assumption}{Assumption}{Assumptions}
\crefname{lemma}{Lemma}{Lemmas}
\crefname{claim}{Claim}{Claims}
\crefname{remark}{Remark}{Remarks}
\newtheorem{theorem}{Theorem}[section]
\newtheorem{lemma}[theorem]{Lemma}
\newtheorem{remark}[theorem]{Remark}        
\numberwithin{equation}{section}
\def\YYint#1#2#3{{\setbox0=\hbox{$#1{#2#3}{\iint}$}
\vcenter{\hbox{$#2#3$}}\kern-.50\wd0}}
\def\XXint#1#2#3{{\setbox0=\hbox{$#1{#2#3}{\int}$}
\vcenter{\hbox{$#2#3$}}\kern-.50\wd0}}
\def\namedlabel#1#2{\begingroup
\def\@currentlabel{#2}%
\label{#1}\endgroup
}
\newcommand{\rmh}[1]{\mathpalette{\raisem@th{#1}}}
\newcommand{\raisem@th}[3]{\hspace*{-1pt}\raisebox{#1}{$#2#3$}}
\newcommand{\descref}[2]{\hyperref[#1]{\textup{\textcolor{black}{(}\textcolor{blue}{\bf #2}\textcolor{black}{)}}}}
\g@addto@macro\normalsize{%
\setlength\abovedisplayskip{3pt}
\setlength\belowdisplayskip{3pt}
\setlength\abovedisplayshortskip{1pt}
\setlength\belowdisplayshortskip{3pt}
}
\def\ps@pprintTitle{%
\let\@oddhead\@empty
\let\@evenhead\@empty
\def\@oddfoot{}%
\let\@evenfoot\@oddfoot}
\newcommand{\xeps}{\left(\frac{x}{\varepsilon}\right)}
\newcommand\RR{\mathbb{R}}
\newcommand{\eps}{\varepsilon}
\DeclareMathOperator{\dv}{div}
\DeclareMathOperator{\dist}{dist}
\DeclareMathOperator{\supp}{supp}
\definecolor{Plum}{HTML}{89b02e}
\definecolor{Violet}{HTML}{58429B}
\definecolor{OliveGreen}{HTML}{0d8795}
\def\ps@pprintTitle{%
\let\@oddhead\@empty
\let\@evenhead\@empty
\def\@oddfoot{}%
\let\@evenfoot\@oddfoot}
\begin{document}

\title[Parabolic equations with large drift and potential]{Quantitative periodic homogenization of\\ parabolic equations with large drift and potential}
\author[Kshitij Sinha]{}

\keywords{Homogenization; non self-adjoint operators; convection-diffusion; periodic medium; Convergence rates.}
\subjclass{Primary: 35B27, 47B28, 76R50, 35K57, 35P15; Secondary: 74Q10.}
\email{23d0781@iitb.ac.in}
\date{\today.}

\address{Department of Mathematics, Indian Institute of Technology Bombay, Mumbai, Maharastra, 400076, India}
\maketitle

\centerline{\scshape Kshitij Sinha}
\medskip
{\footnotesize
 \centerline{Department of Mathematics, Indian Institute of Technology Bombay, Mumbai, 400076, India}
}


\begin{abstract}
This work aims to study the rates in the context of periodic homogenization of parabolic problems with large lower order terms (both drift and potential). We demonstrate that the solution is a product of three terms: (i) a function of time, (ii) the ground-state of an exponential cell eigenvalue problem and (iii) the solution to a parabolic equation with zero effective drift. For the latter, we derive $\mathrm L^2$ rates in the homogenization limit.
\end{abstract}



\section{Introduction}
In this paper, for the unknown $u_\eps:\overline{\Omega}\times[0,T)\to\RR$, we study the following parabolic initial boundary value problem:
\begin{equation}\label{eq:main}
\left\{
\begin{aligned}
\frac{\partial u_{\eps}}{\partial t} - \dv\left(A\left(\frac{x}{\eps}\right) \nabla u_{\eps}\right) + \frac{1}{\eps}b\left(\frac{x}{\eps}\right) \cdot \nabla u_{\eps} + \frac{1}{\eps^2}c\left(\frac{x}{\eps}\right)u_{\eps} & = 0 \qquad \mbox{ in }(0,T) \times \Omega,\\
u_{\eps}(t,x) & = 0 \qquad \mbox{ on } (0,T)\times \partial \Omega,\\
u_{\eps}(0,x) & = u_{\eps}^0(x)\qquad \mbox{ in } \Omega.
 \end{aligned}
\right.
\end{equation}
Here the parameter $\eps\in(0,1)$. The bounded domain $\Omega\subset\RR^d$ is assumed to be open and with Lipschitz boundary $\partial\Omega$. The coefficients $A,b,c$ are assumed to be bounded in $\mathrm L^\infty$ and to be $Y$-periodic with the standard notation $Y:=[0,1)^d$ for the unit cell. Furthermore, $A$ is assumed to be uniformly elliptic, i.e.
\[
A(y)\xi\cdot\xi \ge \mu \left\vert \xi\right\vert^2 \quad \forall \xi\in\RR^d \, \mbox{ and for a.e. }y\in Y,
\]
for some $\mu>0$. We will be commenting more on the initial datum $u^0_\eps$ later. In the remainder of this manuscript, the standard notation $\Omega_T$ is used to denote the parabolic cylinder.\\
Our present work is inspired by 
\begin{itemize}
\item a result due to Gr\'egoire Allaire and Rafael Orive \cite{allaire2007homogenization} dealing with the parabolic problems with large drift and potential terms
\item an article of Gr\'egoire Allaire and Anne-Lise Raphael \cite{allaireraphael2007homogenization} which dealt with parabolic problems with large drift and potential terms in periodic porous media
\item a work of Yves Capdeboscq \cite{capdeboscq1998homogenization} addressing the periodic homogenization of eigenvalue problems for elliptic operators involving large drift terms
\end{itemize}
We also recall a work due to Gr\'egoire Allaire and co-authors \cite{allaire2004homogenization} dealing with parabolic problems with large potential terms (and no drift terms). Our present work is likewise related to the work of Patrizia Donato and Andrey Piatnitski \cite{donato2005averaging}. In this work, we demonstrate that the solution to the initial boundary value problem \eqref{eq:main} can be factorized as follows:
\begin{equation}\label{eq:factorization}
u_{\varepsilon}(t,x)=e^{\frac{-\lambda t}{\varepsilon^2}} v_{\varepsilon}(t,x)\psi\left(\frac{x}{\varepsilon}\right)
\end{equation}
where $(\lambda,\psi)$ are an eigenpair associated with a certain exponential $Y$-periodic eigenvalue problem. The use of exponential $Y$-periodic eigenvalue problems in homogenization can be traced back to \cite[page 383]{bensoussan2011asymptotic}. Allaire and Orive in \cite[Lemma 2.1]{allaire2007homogenization} (see also \cite[Lemma 3]{capdeboscq1998homogenization}, where Capdeboscq treats operators with no zeroth order terms) show the existence of a unique $\theta\in\mathbb{R}^d$ such that the eigenvalue problems (with $A^\ast$ denoting the adjoint of $A$):
\begin{equation}\label{eq:theta-direct-evp}
\left\{
\begin{aligned}
-\dv_y ({A}(y)\nabla \psi(y)) + b(y)\cdot\nabla_y\psi(y) + c(y) \psi(y) = \lambda \psi(y) \qquad \mbox{ in }(0,1)^d\\
y \mapsto \psi(y)e^{-2\pi \theta \cdot y}  \quad \mbox{ is }Y\mbox{-periodic},
\end{aligned}
\right.
\end{equation}
and
\begin{equation}\label{eq:theta-adjoint-evp}
\left\{
\begin{aligned}
-\dv_y ({A}^\ast(y)\nabla \psi^\ast(y)) - \dv_y(b(y)\psi^\ast(y)) + c(y) \psi^\ast(y) = \lambda^\ast \psi^\ast(y) \qquad \mbox{ in } (0,1)^d\\
y \mapsto \psi^\ast(y)e^{2\pi \theta \cdot y} \quad \mbox{ is }Y\mbox{-periodic},  
\end{aligned}
\right.
\end{equation}
have the following properties:
\begin{itemize}
\item Both the problems \eqref{eq:theta-direct-evp} and \eqref{eq:theta-adjoint-evp} have the same first eigenvalue $(\lambda=\lambda^\ast)$ which is real and simple.
\item The first eigenfunctions of \eqref{eq:theta-direct-evp} and \eqref{eq:theta-adjoint-evp} are bounded and can be chosen such that
\begin{equation}\label{eq:eigenfn-normalisation}
\min_Y\{\psi, \psi^\ast\} \ge a > 0
\qquad
\mbox{ and }
\qquad 
\int_Y \psi(y) \psi^\ast(y)\, {\rm d}y = 1.
\end{equation}
\item The function $\beta:Y\to\mathbb{R}^d$ defined by
\begin{equation}\label{eq:defn-beta}
\beta(y) := \psi(y)\psi^\ast(y)b(y) + \psi(y)A^\ast(y)\nabla_y \psi^\ast(y) - \psi^\ast(y)A(y)\nabla_y \psi(y) \quad \mbox{ for }y\in Y,
\end{equation}
is divergence-free and is of zero average over $Y$, i.e.
\[
\int_Y \beta(y)\, {\rm d}y = 0.
\]
\end{itemize}
The factorization \eqref{eq:factorization} implies that $u_\eps(0,x)$ and $v_\eps(0,x)$ are equal to each other up to a factor of the first eigenfunction $\psi\left(\frac{x}{\eps}\right)$ mentioned above. Observe further that, for the factorization \eqref{eq:factorization} to hold, the factor $v_\varepsilon$ should necessarily satisfy the following initial boundary value problem:
\begin{equation}\label{equation for v-eps}
\left\{
\begin{aligned}
\sigma\left(\frac{x}{\varepsilon}\right)\frac{\partial v_{\varepsilon}}{\partial t} -\operatorname{div}\left(\alpha\left(\frac{x}{\varepsilon}\right)\nabla v_{\varepsilon}\right) + \frac{1}{\varepsilon}\beta\left(\frac{x}{\varepsilon} \right)\cdot\nabla v_{\varepsilon}  & =  0 \qquad \mbox{ in }(0,T)\times \Omega,
\\
v_{\varepsilon}&=0 \qquad \mbox{ on } (0,T]\times \partial \Omega,
\\
v_{\varepsilon}(0,x)&=u_0(x)\qquad \mbox{ in } \Omega,
\end{aligned}
\right.
\end{equation}
where the coefficient vector field $\beta$ is defined in \eqref{eq:defn-beta} and the coefficients $\sigma$ and $\alpha$ are $Y$-periodic and are defined as follows:
\begin{equation}
\sigma(y) := \psi(y)\psi^\ast(y)
\quad
\mbox{ and }
\quad
\alpha(y) := \sigma(y) A(y) 
\quad
\mbox{ for a.e. }y\in Y.
\end{equation}
Note that the coefficient $\sigma$ is $Y$-periodic because
\begin{equation*}
\sigma(y) 
= \psi(y)\psi^\ast(y)
= \psi(y)e^{-2\pi \theta \cdot y} e^{2\pi \theta \cdot y}\psi^\ast(y).
\end{equation*}
\begin{remark}\label{rem:wellprepared}
Our choice of taking the initial datum for $v_\eps$ in \eqref{equation for v-eps} to be $\eps$-independent forces, due to the factorization \eqref{eq:factorization}, the initial datum for $u_\eps$ in \eqref{eq:main} to be of the form:
\begin{equation}\label{eq:wellprepared}
u_\eps(0,x) = u^0_\eps(x) = u_0(x) \psi\left(\frac{x}{\eps}\right)
\end{equation}
This is sometimes referred to as the well prepared initial datum. Similar well preparedness assumptions on the data have been made in the literature \cite{allaire2004homogenization, allaire2007homogenization}. This choice results in strong convergence of the factor $v_\eps$ to its homogenized limit. Note further that \eqref{eq:wellprepared} is the same as 
\[
u_\eps(0,x) = u_0(x) e^{\frac{\theta\cdot x}{\eps}} P\left(\frac{x}{\eps}\right)
\]
for a $Y-$periodic function $P$. Well prepared initial data in this form have appeared in \cite[Remark 3.2]{allaire2007homogenization}, \cite[Theorem 4.4]{MR1796243} and \cite[Theorem 3.2]{allaire2004homogenization}. If one were to keep the data $u_\eps(0,x)$ independent of the parameter $\eps$, then the factorization \eqref{eq:factorization} forces the initial datum for $v_\eps$ to be
\begin{equation}\label{eq:alt-v-eps-init}
v_\eps(0,x) = \frac{u_0(x)}{\psi\left(\frac{x}{\eps}\right)}.
\end{equation}
It should be noted that the equation \eqref{equation for v-eps} can still be homogenized with \eqref{eq:alt-v-eps-init} as the initial datum. The homogenized equation remains the same, but with the initial datum for the limit problem being
\begin{equation*}
\left(\int_Y\frac{1}{\psi(y)}\, {\rm d}y \right) u_0(x).
\end{equation*}
All of this can be argued using the notion of two-scale convergence \cite{allaire1992homogenization}, for example. The authors of \cite{allaire2012homogenization} have studied the case of $\eps$-independent initial data with compact support. They consider a factorisation similar to \eqref{eq:factorization} and find an asymptotic expression for the factor $v_{\eps}$ using the representation formula involving the Green's function. The analysis carried out in \cite{allaire2012homogenization} is quite intricate and makes certain assumptions on the geometry of the support of the initial datum (see \cite[page 305]{allaire2012homogenization}).
\end{remark}
Our main result is the following quantitative homogenization result for the factor $v_\eps$ which solves the initial boundary value problem \eqref{equation for v-eps}:
\begin{theorem}\label{thm:main-result}
Let $v_{\eps}$ be the solution to \eqref{equation for v-eps} and let $v_0$ be the solution to the corresponding homogenized problem \eqref{eq:homogeneqn-v0}. Then there exists a positive constant $C$, independent of $\varepsilon$, such that the following estimate holds:
\begin{equation}\label{main result}
\begin{aligned}
\left \Vert v_\eps - v_0 \right\Vert_{\mathrm L^2(\Omega_T)} 
&  \le C \sqrt{\eps} \left\{ \left\Vert \partial_tv_0\right\Vert_{L^2(0,T;W^{1,d}(\Omega))} + \left\Vert v_0\right\Vert_{\mathrm L^2(0,T;\mathrm H^2(\Omega))} + \left\Vert u_0\right\Vert_{L^2(\Omega)} \right\}.
\end{aligned}
\end{equation}
\end{theorem}
In the above result, there is an inherent assumption that the terms on the right hand side of the bound \eqref{main result} are finite. This essentially translates to the data being sufficient smooth to guarantee adequate regularity for the solution $v_0$ of the homogenized equation. The proof of the above theorem is inspired by the ideas in the work of Jun Geng and Zhongwei Shen \cite{geng2017convergence} that dealt with $\mathrm L^2$ convergence rates in the context of periodic homogenization of parabolic problems. 
\begin{remark}\label{rem:duality-1}
Our estimate \eqref{main result} is not sharp as in \cite{geng2017convergence}. In there, the authors have obtained an $\mathrm L^2$ convergence rate of $\mathcal{O}(\eps)$. Upon deriving a $\mathcal{O}(\sqrt{\eps})$ rate for $\left\Vert v_\eps - v_0\right\Vert_{\mathrm L^2(\Omega_T)}$, the authors in \cite{geng2017convergence} employ a duality argument to improve the rate to $\mathcal{O}(\eps)$. We emphasise that the same strategy cannot be followed in our case as the bound \eqref{main result} involves the $W^{1,d}(\Omega)$-norm. The matter of deriving optimal $\mathcal{O}(\eps)$ rates for $\mathrm L^2$ convergence is left for future investigations. We comment that one may be able to employ the duality argument provided the following bound holds:
\[
\Vert \partial_t v_0 \Vert_{\mathrm L^2(0,T;\mathrm W^{1,d}(\Omega)) } \leq C \Vert f  \Vert_{\mathrm L^2( \Omega_T )}.
\]
\end{remark}
Before we comment further on the proof of Theorem \ref{thm:main-result}, we return to the equation \eqref{equation for v-eps}. As $\beta$ (defined by \eqref{eq:defn-beta}) is periodic, divergence-free and is of zero average, there exists a periodic skew-symmetric matrix-valued function $y\mapsto B(y)$ such that (see \cite[Lemma 1, p. 809]{capdeboscq1998homogenization})
\[
\beta(y) = - \dv_y B(y) \qquad \mbox{ in }Y.
\]
Taking a $Y$-periodic matrix-valued function $y\mapsto M(y) := \alpha(y) + B(y)$, the evolution equation in \eqref{equation for v-eps} can be rewritten as
\begin{equation}\label{equation for v-eps-bis}
\sigma\left(\frac{x}{\varepsilon}\right)\frac{\partial v_\eps}{\partial t} - \dv\left(M\left(\frac{x}{\varepsilon}\right)\nabla v_\eps\right) = 0 \qquad \mbox{ in }(0,T)\times\Omega.
\end{equation}
Observe that $M$ is also elliptic:
\[
M(y)\xi\cdot\xi = \alpha(y)\xi\cdot\xi \ge \mathfrak b \left\vert \xi\right\vert^2 \quad \forall \xi\in\RR^d \, \mbox{ and for a.e. }y\in Y,
\]
where the ellipticity constant $\mathfrak b$ depends on the ellipticity constant $\mu$ of $A$ and the lower bound $a$ in \eqref{eq:eigenfn-normalisation}. Using classical homogenization arguments (see \cite[Theorem 11.4, p. 211]{cioranescu1999introduction}), we can show that 
\begin{equation}\label{eq:veps-to-v0}
\lim_{\eps\to0} \left\Vert v_\eps - v_0 \right\Vert_{\mathrm L^2(\Omega_T)} = 0,
\end{equation}
where $v_0$ satisfies the following homogenized initial boundary value problem:
\begin{equation}\label{eq:homogeneqn-v0}
\left\{
\begin{aligned}
\frac{\partial v_0}{\partial t} - \dv\left(M_h\nabla v_0\right) & =  0 \qquad \mbox{ in }(0,T)\times \Omega,
\\
v_0 & = 0 \qquad \mbox{ on } (0,T]\times \partial \Omega,
\\
v_0(0,x) & = u_0(x) \qquad \mbox{ in } \Omega,
\end{aligned}\right.
\end{equation}
where $M_h$ is the homogenized matrix with constant entries given by
\[
M_h = \int_Y \left( M(y) + M(y) \nabla_y \chi(y) \right)\, {\rm d}y,
\]
with $\chi(y) = \left( \chi_1(y), \dots, \chi_d(y) \right)$, where $\chi_j\in \mathrm H^1_{\text{per}}(Y)$ solve the cell problems:
\begin{equation}\label{eq:cellpb}
\left\{
\begin{aligned}
\dv_y\left(M(y)\nabla_y \left( \chi_j(y) + y_j \right)\right) & = 0 \qquad \mbox{ in }Y,
\\
\int_Y \chi_j(y) \, {\rm d} y & = 0,
\end{aligned}\right.
\end{equation}
for $j=1,\dots,d$. Theorem \ref{thm:main-result} quantifies the aforementioned strong convergence of $v_\eps$ to $v_0$.
\begin{remark}\label{rem:decay-ueps}
The above noted uniform ellipticity of $M$ helps us arrive at the following uniform (with respect to $\eps$) estimate on the solution $v_\eps$ to \eqref{equation for v-eps-bis}:
\begin{equation}\label{eq:LinftyL2-veps}
\left\Vert v_\eps \right\Vert_{\mathrm L^\infty([0,T];\mathrm L^2(\Omega))}
\le C \left\Vert u_0 \right\Vert_{\mathrm L^2(\Omega)}.
\end{equation}
Appealing now to the factorization \eqref{eq:factorization} we arrive at
\begin{equation}\label{eq:ueps-bd}
\left\Vert u_\eps \right\Vert^2_{\mathrm L^2(\Omega_T)} 
= \int_0^T \int_\Omega e^{-\frac{2\lambda t}{\eps^2}} \left\vert \psi\left(\frac{x}{\eps}\right)\right\vert^2 \left\vert v_\eps(t,x) \right\vert^2\, {\rm d}x\, {\rm d}t
\le C \int_0^T e^{-\frac{2\lambda t}{\eps^2}}\, {\rm d}t,
\end{equation}
where we have used the boundedness of the eigenfunction $\psi$ in $\mathrm L^\infty$ and the uniform bound on $v_\eps$ mentioned above. Next, subject to $\lambda$ being strictly positive, performing time integration in the bound \eqref{eq:ueps-bd} leads to 
\begin{equation}\label{eq:decay-ueps}
\left\Vert u_\eps \right\Vert_{\mathrm L^2(\Omega_T)} \le C \eps.
\end{equation}
As mentioned in Remark \ref{rem:wellprepared}, in the absence of the well-prepared initial datum \eqref{eq:wellprepared}, the initial datum for $v_\eps$ in \eqref{equation for v-eps} takes the form \eqref{eq:alt-v-eps-init}. Note that the uniform estimate \eqref{eq:LinftyL2-veps} for $v_\eps$ holds true even in this case. This implies that the above conditional (i.e. under the assumption of $\lambda>0$) decay rate (in $\eps$) for the solution $u_\eps$ to \eqref{eq:main} remains true for general $\mathrm L^2$ initial data (i.e. no need for well-preparedness in the sense of \eqref{eq:wellprepared}). More comments on the aforementioned conditional decay rate for $u_\eps$ is relegated to our concluding remarks (see Section \ref{sec:conclude}). We would like to mention that in \cite{allaire2012homogenization}, the authors have obtained a much better decay rate (albeit a conditional result) of order $\mathcal{O}(\eps^3 \eps^{\frac{d-1}{2}})$ (see \cite[Remark 5.2]{allaire2012homogenization} for further details). 
\end{remark}
\begin{remark}
Factorizations comparable to \eqref{eq:factorization} have appeared before \cite{donato2005averaging, allaireraphael2007homogenization, allaire2007homogenization}. We would like to emphasise here that \cite{donato2005averaging} treats the setting where both the $\eps$-problem and the corresponding homogenized problem are posed in the full space, i.e. the spatial domain is the whole of $\RR^d$. The authors of \cite{allaireraphael2007homogenization} too are working in a perforated medium built out of the whole of $\RR^d$. This is inherently due to the effective non-zero drift present in the model. In \cite{allaire2007homogenization}, the authors pose the problem in a bounded spatial domain $\Omega$. However, they make a change of unknown which takes the domain to the whole space $\RR^d$ and incidentally the homogenized equation obtained in \cite[Theorem 3.1]{allaire2007homogenization} is posed in $\RR^d\times[0,T)$. In \cite{allaire2012homogenization}, however, the case where both the $\eps$-problem and the homogenized problem are posed in $\Omega_T$ is treated. The present work is in similar spirit. Here, we try to quantify the convergence of $v_{\eps}$ to the corresponding homogenized limit as opposed to getting an ansatz for $v_{\eps}$ which is carried out in \cite{allaire2012homogenization}.
\end{remark}
\begin{remark}
Our analysis follows the approach of Geng and Shen \cite{geng2017convergence} which relies on fine properties of a parabolic smoothing operator. If one builds parabolic boundary layer correctors, then the rates can be obtained without the involvement of these smoothing operators. This line of approach is inspired by the work of Allaire and Amar \cite{allaire1999boundary} which treated boundary layers in the context of elliptic problems. We will be addressing these points in a future publication.
\end{remark}
The results of this paper also yields quantitative rates for the periodic homogenization of parabolic equations with nondivergence form elliptic part and with large lower order terms. We draw inspiration from the work of Wenjia Jing and Yiping Zhang \cite{jing2023periodic} which derived quantitative rates for the periodic homogenization of nondivergence form elliptic equations with large drift terms. The key idea in \cite{jing2023periodic} is to use the invariant measure associated with the underlying diffusion process. This is a well-known procedure (see \cite{avellaneda1989compactness, bensoussan2011asymptotic}, also see \cite[section 2.2]{capdeboscq2020finite}) to transform nondivergence form equations into divergence form. This present work deviates from the earlier works by not appealing to this invariant measure. We demonstrate that via exponential eigenvalue problems akin to \eqref{eq:theta-direct-evp}-\eqref{eq:theta-adjoint-evp}, one can transform the parabolic equation with terms in nondivergence form into a parabolic equation with terms in divergence form.   The initial-boundary value problem, for the unknown $\rho_\eps:\overline{\Omega}\times[0,T)\to\RR$, that we examine is the following:
\begin{equation}\label{non-div eq}
\left\{
\begin{aligned}
\frac{\partial \rho_{\eps}}{\partial t} - \mathcal{K}\left(\frac{x}{\eps}\right) : \nabla^2 \rho_\eps + \frac{1}{\eps}q\left(\frac{x}{\eps}\right)\cdot\nabla \rho_\eps + \frac{1}{\eps^2}r\left(\frac{x}{\eps}\right)\rho_{\eps} & = 0 \qquad \mbox{ in }(0,T) \times \Omega,
\\
\rho_{\eps}(t,x) & = 0\qquad \mbox{ on } (0,T)\times \partial \Omega,
\\
\rho_{\eps}(0,x) & = \rho_{\eps}^0(x) \qquad \mbox{ in }\Omega.
 \end{aligned}
\right.
\end{equation}
Here the scalar-valued function $r$, the vector-valued function $q$ and the symmetric matrix-valued function $\mathcal{K}$ are all assumed to be $Y$-periodic. The coefficients $q$ and $r$ are assumed to be bounded in $\mathrm L^\infty(Y)$. We further assume $\mathcal{K}$ to be elliptic and that it belongs to $\mathrm W^{1,s}(Y)$ for some $s>d$. Invoking Sobolev embedding, one gets $\mathcal{K}\in\mathrm C^{0,\tau}(Y)$ for some $\tau\in(0,1]$. Additionally, we shall assume that $\dv_y \mathcal{K}\in \mathrm L^\infty(Y)$. Thanks to these assumptions, the evolution equation in \eqref{non-div eq} can be rewritten as
\begin{equation*}
\frac{\partial \rho_{\eps}}{\partial t} - \dv \left(\mathcal{K}\left(\frac{x}{\eps}\right) \nabla \rho_\eps\right) + \frac{1}{\eps}\left(\dv_y \mathcal{K}\left(\frac{x}{\eps}\right) + q\left(\frac{x}{\eps}\right)\right)\cdot\nabla \rho_\eps + \frac{1}{\eps^2}r\left(\frac{x}{\eps}\right)\rho_{\eps} = 0
\end{equation*}
This is similar to the evolution equation in \eqref{eq:main}. Hence, analogous to the factorization \eqref{eq:factorization}, we shall have that the solution to \eqref{non-div eq} can be factored as follows:
\begin{equation}\label{eq:factorization-nondiv}
\rho_{\varepsilon}(t,x)=e^{\frac{-\widehat\lambda t}{\varepsilon^2}} \varphi_{\varepsilon}(t,x)\widehat\psi\left(\frac{x}{\varepsilon}\right),
\end{equation}
where $\widehat\lambda$ and $\widehat\psi$ are an eigenpair associated with the exponential eigenvalue problems akin to \eqref{eq:theta-direct-evp}-\eqref{eq:theta-adjoint-evp}, with the following choices of the coefficients:
\begin{equation*}
A(y) = A^\ast(y) = \mathcal{K}(y);
\qquad
b(y) = \dv_y\mathcal{K}(y) + q(y);
\qquad
c(y) = r(y).
\end{equation*}
Thus, Theorem \ref{thm:main-result} gives a quantitative rate of convergence for $\varphi_\eps$ to the solution of the corresponding homogenized equation.
\begin{remark}
In the context of the quantitative periodic homogenization of nondivergence form elliptic equations, Jing and Zhang \cite{jing2023periodic} left open the case involving potential terms of order $\mathcal{O}(\eps^{-2})$. We point out that the present work does not give a definitive answer to that question. Nevertheless, we have treated the parabolic version of that case. The technique adapted in this work is tailored specifically for parabolic problems. We cite the work of Zhang \cite{zhang2021estimates} which treated the large potential case in divergence form (without the drift terms), but of $\mathcal{O}(\eps^{-1})$.
\end{remark}
We finish this rather long introduction by detailing the structure of the rest of the paper. Section \ref{sec:2} quantifies the convergence \eqref{eq:veps-to-v0} for general second order parabolic problems with the positive coefficient next to the time derivative term being merely measurable and bounded. The main result of section \ref{sec:2} is Theorem \ref{thm:homogen-feps} which gives a rate of $\mathcal{O}(\eps^{\frac14})$. Section \ref{sec:3} improves this rate to $\mathcal{O}(\eps^\frac12)$ with an additional assumption that the coefficient of the time derivative is regular (see Theorem \ref{eq:L2-estimate better}). Section \ref{sec:conclude} presents few scenarios where decay rate mentioned in Remark \ref{rem:decay-ueps} holds true. The manuscript concludes with an appendix which gathers some useful results on the parabolic smoothing operator.
\section*{Acknowledgements}
The author would like to thank his PhD advisor Harsha Hutridurga for introducing him to the exciting theory of homogenization and for his useful remarks during the preparation of this manuscript. The author has greatly benefited from the discussions with Chandan Biswas and Debanjana Mitra on a preliminary version of this manuscript. The author would also like to thank Gr\'egoire Allaire for pointing him to the reference \cite{allaire2012homogenization}. Finally,  the author acknowledges the support of NBHM-DAE (Government of India).
\section{The case of merely bounded coefficients}\label{sec:2}
In this and the next section, we will study the following general parabolic problem:
\begin{equation}\label{eq:equation-for-feps}
\left\{
\begin{aligned}
\zeta\left(\frac{x}{\eps}\right)\frac{\partial f_{\varepsilon}}{\partial t} - \dv\left(\Theta\left(\frac{x}{\eps}\right)\nabla f_{\varepsilon}\right) & =  0 \qquad \mbox{ in }(0,T)\times \Omega,
\\
f_\eps & = 0 \qquad \mbox{ on } (0,T]\times \partial \Omega,
\\
f_\eps(0,x) & = f_{\text{in}}(x) \qquad \mbox{ in } \Omega.
\end{aligned}
\right.
\end{equation}
We assume that the coefficient $\zeta\in\mathrm L^{\infty}_{\text{per}}(Y)$ satisfies
\begin{equation}\label{eq:zeta-average-1}
\zeta(y) \ge c >0 \, \mbox{ for a.e. }y\in Y
\quad\mbox{ and }\quad
\int_Y \zeta(y)\, {\rm d}y = 1,
\end{equation}
where $c$ is a constant. In the next section, an additional regularity assumption is made on $\zeta$. The matrix-valued coefficient $\Theta\in\mathrm L^\infty_{\text{per}}(Y)$ is assumed to be uniformly elliptic, i.e.
\[
\Theta(y)\xi\cdot\xi \ge \kappa \left\vert \xi\right\vert^2 \quad \forall \xi\in\RR^d \, \mbox{ and for a.e. }y\in Y,
\]
for some $\kappa>0$. Considering the solutions $\omega_j\in \mathrm H^1_{\text{per}}(Y)$ to the cell problems
\begin{equation}\label{eq:cellpb-Theta}
\left\{
\begin{aligned}
\dv_y\left(\Theta(y)\nabla_y \left( \omega_j(y) + y_j \right)\right) & = 0 \qquad \mbox{ in }Y,
\\
\int_Y \zeta(y)\omega_j(y) \, {\rm d} y & = 0,
\end{aligned}\right.
\end{equation}
for $j=1,\dots,d$, we can define a homogenized matrix as follows:
\[
\Theta_h = \int_Y \left( \Theta(y) + \Theta(y) \nabla_y \omega(y) \right)\, {\rm d}y,
\]
with $\omega(y) = \left( \omega_1(y), \dots, \omega_d(y) \right)$. Next, we consider the homogenized problem:
\begin{equation}\label{eq:homogeneqn-f0}
\left\{
\begin{aligned}
\frac{\partial f_0}{\partial t} - \dv\left(\Theta_h\nabla f_0\right) & =  0 \qquad \mbox{ in }(0,T)\times \Omega,
\\
f_0 & = 0 \qquad \mbox{ on } (0,T]\times \partial \Omega,
\\
f_0(0,x) & = f_{\text{in}}(x) \qquad \mbox{ in } \Omega.
\end{aligned}\right.
\end{equation}
The following is a quantitative result corresponding to the periodic homogenization of \eqref{eq:equation-for-feps}.
\begin{theorem}\label{thm:homogen-feps}
Suppose $\zeta \in \mathrm L_{per}^{\infty}(Y)$ satisfies \eqref{eq:zeta-average-1}.
Let $f_{\eps}$ be the solution to \eqref{eq:equation-for-feps} and let $f_0$ be the solution to the corresponding homogenized problem \eqref{eq:homogeneqn-f0}. Then there exists a positive constant $C$, independent of $\varepsilon$, such that the following estimate holds:
\begin{equation}\label{eq:L2-estimate}
\begin{aligned}
\left\Vert f_\eps - f_0 \right\Vert_{\mathrm L^2(\Omega_T)} 
&  \le C \eps^{\frac14} \left\{ \left\Vert \partial_tf_0\right\Vert_{L^2(0,T;W^{1,d}(\Omega))} + \left\Vert f_0\right\Vert_{\mathrm L^2(0,T;\mathrm H^2(\Omega))} + \left\Vert f_{{\rm{in}}}\right\Vert_{L^2(\Omega)} \right\}.
\end{aligned}
\end{equation}
\end{theorem}
Our line of approach is inspired by the work of Geng and Shen \cite{geng2017convergence}. We introduce the following notation: for $s>0$,
\[
\Omega_s := \left\{ x\in \Omega \, \mbox{ such that }\mbox{ dist}(x,\partial\Omega)<s\right\}.
\]
We fix a space cut-off function $\eta_1^\eps \in \mathrm{C}_c^{\infty}(\Omega)$ such that $0\leq \eta_1^\eps \leq 1$ and it further satisfies:
\[
\eta_1^\eps = 1 \quad \mbox{ in }\quad \Omega \backslash \Omega_{4\sqrt{\varepsilon}},
\qquad
\eta_1^\eps = 0 \quad \mbox{ in }\quad \Omega_{3\sqrt{\varepsilon}},
\qquad
\left\vert\nabla \eta_1^\eps \right\vert \leq \frac{C}{\sqrt{\eps}} \quad \mbox{ in }\quad \Omega.
\]
We also fix a time cut-off function $\eta_2^\eps \in \mathrm{C}_c^\infty(0,T)$ such that $0\leq \eta_2^\eps \leq 1$ and it further satisfies:
\[
\eta_2^\eps = 1 \quad \mbox{ in }\, \left(8\eps, T-8\eps\right),
\qquad
\eta_2^\eps = 0 \quad \mbox{ in }\, (0,4\eps]\cup(T-4\eps,T),
\qquad 
\left\vert \partial_t \eta_2^\eps\right\vert \leq \frac{C}{\eps} \quad \mbox{ in }\, (0,T).
\]
Next, we consider a parabolic smoothing operator borrowed from \cite[Section 3, p.2102]{geng2017convergence}:
\begin{equation}\label{eq:smoothing-op}
S_\eps(g)(t,x) := \frac{1}{\eps^{d+2}} \int_{\RR^{d+1}} g(t-s,x-y)\, \theta\left(\frac{s}{\eps^2}, \frac{y}{\eps}\right)\, {\rm d}y\, {\rm d}s,
\end{equation}
where $\theta\in \mathrm C^\infty_c(\mathcal{N})$ is a fixed nonnegative function that satisfies
\[
\int_{\RR^{d+1}}\theta(s,y) \, {\rm d}y\, {\rm d}s = 1.
\]
Here the set $\mathcal{N}\subset\RR^{d+1}$ is defined below:
\[
\mathcal{N} := \left\{ (t,x)\in\RR\times\RR^{d} \mbox{ such that }\left\vert t\right\vert + \left\vert x\right\vert^2 \le 1 \right\}.
\]
We shall also define the following subset of $\Omega\times(0,T)$:
\[
\Omega_{T,\delta} := \Big( \Omega_\delta \times (0,T) \Big) \cup \Big( \Omega \times (0,\delta^2) \Big) \cup \Big( \Omega \times (T-\delta^2, T) \Big) \qquad \mbox{ for }\delta>0.
\]
In the appendix, we have gathered few useful properties of the smoothing operator $S_\eps$ which play a crucial role in our proof. Using the cut-off functions and the smoothing operator mentioned above, we define $w_\eps:\Omega_T\to\RR$ as follows:
\begin{equation}\label{eq:defn-weps}
w_\eps := f_\eps - f_0 - \eps \omega\left(\frac{x}{\eps}\right)\cdot S_\eps \left( \eta_1^\eps \eta_2^\eps \nabla f_0 \right),
\end{equation}
where $f_\eps$ and $f_0$ are solutions to \eqref{eq:equation-for-feps} and \eqref{eq:homogeneqn-f0} respectively. Note that $w_\eps\in\mathrm L^2(0,T;\mathrm H^1_0(\Omega))$ and that $w_\eps(0,\cdot)\equiv0$. The study of a function defined similar to \eqref{eq:defn-weps} is carried out in the quantitative analysis of periodic elliptic problems \cite[Chapter 3, page 33]{shen2018periodic}. We note, however, that the analysis carried out by Geng and Shen in \cite[equation (1.14), page 2095]{geng2017convergence} considers a function which also involves a term of order $\mathcal{O}(\eps^2)$. We will be employing such a construction in the next section. In the remainder of this manuscript, we will sometimes be using the following shorthand notations:
\[
\zeta_\eps(x) := \zeta\left(\frac{x}{\eps}\right),
\quad
\Theta_\eps(x) := \Theta\left(\frac{x}{\eps}\right),
\quad
\omega_\eps(x) := \omega\left(\frac{x}{\eps}\right)
\]
We begin our analysis with the following result. 
\begin{lemma}
Let $w_\eps$ be defined by \eqref{eq:defn-weps}. Then for any $\psi\in\mathrm L^2(0,T;\mathrm H^1_0(\Omega))$, we have
\begin{equation}\label{eq:wkform-lemma}
\begin{aligned}
\int_0^T & \left\langle \zeta_\eps\partial_{t}w_{\varepsilon},\psi \right\rangle_{\mathrm{H}^{-1}(\Omega) \times \mathrm{H}_0^1(\Omega)} 
+ \int_{\Omega_T} \Theta_\eps\nabla w_{\varepsilon} \cdot \nabla \psi 
= - \int_{\Omega_T} (\zeta_\eps -1)\partial_{t}f_{0}\psi 
\\
& \quad - \eps \int_{\Omega_T} \zeta_\eps \omega_\eps \cdot \partial_t \left[S_\eps (\eta_1^\eps \eta_2^\eps \nabla f_0)\right] \psi 
+\int_{\Omega_T}\left(\Theta_h - \Theta_\eps\right) \left\{\nabla f_0 - S_\eps(\eta_1^\eps\eta_2^\eps \nabla f_0)\right\} \cdot \nabla \psi
\\
& \quad 
- \eps \int_{\Omega_T} \Theta_\eps \nabla \bigg\{S_{\varepsilon}(\eta_1^\eps \eta_2^\eps \nabla f_0)\bigg\}\omega_\eps \cdot \nabla \psi
+ \eps \int_{\Omega_T} \phi_{kij}\left(\frac{x}{\eps}\right) \frac{\partial}{\partial x_k} \bigg\{S_{\varepsilon}\left(\eta_1^\eps \eta_2^\eps \frac{\partial f_0}{\partial x_j} \right)\bigg\} \frac{\partial\psi}{\partial x_i},
\end{aligned}
\end{equation}
where the summation convention over repeated indices $i,j,k\in\{1,\dots,d\}$ is used.\\
Here $\phi_\eps(x) = \phi \left(\frac{x}{\eps}\right)$ with $\phi$ being a $Y$-periodic third-order anti-symmetric tensor such that
\begin{equation}\label{eq:div-phi-B}
\dv_y \phi(y) = \Theta(y) + \Theta(y)\nabla_y\omega(y) - \Theta_h \qquad \mbox{ in }Y.
\end{equation}
\end{lemma}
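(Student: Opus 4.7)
The strategy is direct: manipulate the weak formulation by substituting the PDEs \eqref{eq:equation-for-feps} and \eqref{eq:homogeneqn-f0} in place of $\zeta_\eps \partial_t f_\eps$ and $\partial_t f_0$, expand the corrector contribution from \eqref{eq:defn-weps} using the product rule, and then rewrite the resulting oscillating residue via the antisymmetric flux corrector $\phi$. Writing $g := S_\eps(\eta_1^\eps \eta_2^\eps \nabla f_0)$ for brevity, I first differentiate \eqref{eq:defn-weps} to obtain
\[
\zeta_\eps \partial_t w_\eps = \zeta_\eps \partial_t f_\eps - \zeta_\eps \partial_t f_0 - \eps \zeta_\eps \omega_\eps \cdot \partial_t g.
\]
Testing against $\psi$ and using the weak form of \eqref{eq:equation-for-feps} turns $\langle \zeta_\eps \partial_t f_\eps, \psi\rangle$ into $-\int \Theta_\eps \nabla f_\eps \cdot \nabla \psi$, which cancels the $\nabla f_\eps$ contribution in $\int\Theta_\eps \nabla w_\eps\cdot\nabla\psi$. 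Splitting $\zeta_\eps = (\zeta_\eps - 1) + 1$ in the $\partial_t f_0$ term and using \eqref{eq:homogeneqn-f0} to convert $-\int \partial_t f_0\,\psi$ into $\int \Theta_h \nabla f_0 \cdot \nabla \psi$ yields the first two terms on the right-hand side of \eqref{eq:wkform-lemma} and leaves the residue $\int (\Theta_h - \Theta_\eps) \nabla f_0 \cdot \nabla \psi - \int \Theta_\eps \nabla[\eps\omega_\eps \cdot g] \cdot \nabla \psi$.

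In the first of these residues I insert and subtract $g$ to recover term 3 of \eqref{eq:wkform-lemma}, at the cost of a further residual $\int(\Theta_h - \Theta_\eps) g \cdot \nabla\psi$. Expanding $\nabla[\eps \omega_\eps \cdot g]$ by the product rule, the $\eps^{-1}$ from the chain rule on $\omega(x/\eps)$ cancels the prefactor $\eps$ and produces $(\Theta \nabla_y\omega)_\eps g \cdot \nabla\psi$ with no residual $\eps$, plus an $\eps$-small term $\eps\, \Theta_\eps (\nabla g)\, \omega_\eps \cdot \nabla \psi$ that matches term 4 exactly. Combining what remains, the total outstanding residual collapses to
\[
- \int_{\Omega_T} \tilde\Theta_\eps\, g \cdot \nabla \psi, \qquad \text{where} \qquad \tilde\Theta(y) := \Theta(y) + \Theta(y)\nabla_y\omega(y) - \Theta_h.
\]

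By \eqref{eq:div-phi-B}, in components $\tilde\Theta^{ij} = \partial_{y_k}\phi_{kij}$, and using $\partial_{y_k}\phi_{kij}(x/\eps) = \eps\,\partial_{x_k}[\phi_{kij}(x/\eps)]$, a spatial integration by parts in $x_k$ yields
\[
-\int_{\Omega_T} \tilde\Theta_\eps\, g \cdot \nabla\psi \; = \; \eps \int_{\Omega_T} \phi_{kij}(x/\eps)\, \partial_{x_k} g_j\, \partial_{x_i}\psi \; + \; \eps \int_{\Omega_T} \phi_{kij}(x/\eps)\, g_j\, \partial^2_{x_k x_i}\psi,
\]
in which the first integral is exactly term 5 of \eqref{eq:wkform-lemma} and the second vanishes by the antisymmetry of $\phi_{kij}$ in $(k,i)$ against the symmetric Hessian $\partial^2_{x_k x_i}\psi$. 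The main subtlety is justifying this integration by parts when $\psi \in \mathrm L^2(0,T;\mathrm H^1_0(\Omega))$, since $\nabla\psi$ has no boundary trace and the Hessian is only distributional. Both issues are resolved by the spatial cutoff: the smoothing kernel in \eqref{eq:smoothing-op} has spatial support in $\{|y| \le \eps\}$ and $\eta_1^\eps \equiv 0$ on $\Omega_{3\sqrt\eps}$, so $g \equiv 0$ on $\Omega_{2\sqrt\eps}$ for small $\eps$. Consequently the divergence $\partial_{x_k}[\phi_{kij}(x/\eps)\, g_j\, \partial_{x_i}\psi]$ integrates to zero with no boundary contribution, and the Hessian identity is established by approximating $\psi$ with smooth compactly supported test functions, for which the antisymmetry cancellation is valid pointwise and both sides of the final identity are continuous in $\psi$ with respect to the $\mathrm H^1_0$-topology. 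This completes the derivation of \eqref{eq:wkform-lemma}.
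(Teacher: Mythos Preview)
Your proof is correct and follows essentially the same route as the paper: compute $\zeta_\eps\partial_t w_\eps$ and $\Theta_\eps\nabla w_\eps$, substitute the equations \eqref{eq:equation-for-feps} and \eqref{eq:homogeneqn-f0}, regroup to isolate $\tilde\Theta_\eps = \Theta_\eps + \Theta_\eps\nabla_y\omega_\eps - \Theta_h$, and then use the antisymmetry of $\phi$ together with an integration by parts. The only cosmetic difference is that the paper invokes the identity $\partial_{x_k}(\phi_{kij})\,\partial_{x_i}\psi = \partial_{x_k}(\phi_{kij}\,\partial_{x_i}\psi)$ before integrating by parts, whereas you integrate by parts first and then kill the Hessian term by antisymmetry; your version has the minor advantage of making the justification for general $\psi\in\mathrm L^2(0,T;\mathrm H^1_0(\Omega))$ explicit via the compact support of $g$.
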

\begin{proof}
A direct computation leads to
\[
\zeta_\eps\partial_{t}w_{\varepsilon}
= \zeta_\eps \partial_t f_\eps 
- \zeta_\eps \partial_t f_0 
- \eps \zeta_\eps \omega_\eps\cdot \partial_t \left\{S_\eps \left( \eta_1^\eps \eta_2^\eps \nabla f_0 \right)\right\}
\]
and
\begin{equation*}
\begin{aligned}
\Theta_\eps \nabla w_\eps 
& = \Theta_\eps \nabla f_\eps
- \Theta_\eps \nabla f_0
- \Theta_\eps \nabla_y \omega\left(\frac{x}{\eps}\right) S_\eps \left( \eta_1^\eps \eta_2^\eps \nabla f_0 \right)
- \eps \Theta_\eps \nabla \left\{S_\eps \left( \eta_1^\eps \eta_2^\eps \nabla f_0 \right)\right\} \omega_\eps
\\
& = \Theta_\eps \nabla f_\eps
- \Theta_h \nabla f_0 
+ \left(\Theta_h - \Theta_\eps\right) \nabla f_0
- \left( \Theta_\eps + \Theta_\eps \nabla_y \omega\left(\frac{x}{\eps}\right) - \Theta_h\right) S_\eps \left( \eta_1^\eps \eta_2^\eps \nabla f_0 \right)
\\
& \quad - \left(\Theta_h - \Theta_\eps\right) S_\eps \left( \eta_1^\eps \eta_2^\eps \nabla f_0 \right)
- \eps \Theta_\eps \nabla \left\{S_\eps \left( \eta_1^\eps \eta_2^\eps \nabla f_0 \right)\right\} \omega_\eps
\\
& = \Theta_\eps \nabla f_\eps
- \Theta_h \nabla f_0 
+ \left(\Theta_h - \Theta_\eps\right) \left( \nabla f_0 - S_\eps \left( \eta_1^\eps \eta_2^\eps \nabla f_0 \right) \right)
- \eps \dv \left( \phi_\eps \right) S_\eps \left( \eta_1^\eps \eta_2^\eps \nabla f_0 \right) 
\\
& \quad 
- \eps \Theta_\eps \nabla \left\{S_\eps \left( \eta_1^\eps \eta_2^\eps \nabla f_0 \right)\right\} \omega_\eps,
\end{aligned}
\end{equation*}
where we have used the equation \eqref{eq:div-phi-B} in the last step. Hence, for every $\psi\in\mathrm L^2(0,T;\mathrm H^1_0(\Omega))$, we have
\begin{equation}\label{eq:wkform-1}
\begin{aligned}
\int_0^T & \left\langle \zeta_\eps\partial_{t}w_{\varepsilon},\psi \right\rangle_{\mathrm{H}^{-1}(\Omega) \times \mathrm{H}_0^1(\Omega)} 
+ \int_{\Omega_T} \Theta_\eps\nabla w_{\varepsilon} \cdot \nabla \psi 
= - \int_{\Omega_T} (\zeta_\eps -1)\partial_{t}f_{0}\psi  
\\
& \qquad - \eps \int_{\Omega_T} \zeta_\eps \omega_\eps \cdot \partial_t \left[S_\eps (\eta_1^\eps \eta_2^\eps \nabla f_0)\right] \psi 
+\int_{\Omega_T}\left(\Theta_h - \Theta_\eps\right) \left\{\nabla f_0 - S_\eps(\eta_1^\eps\eta_2^\eps \nabla f_0)\right\} \cdot \nabla \psi
\\
& \qquad - \eps \int_{\Omega_T} \Theta_\eps \nabla \bigg\{S_{\varepsilon}(\eta_1^\eps \eta_2^\eps \nabla f_0)\bigg\}\omega_\eps \cdot \nabla \psi
- \eps \int_{\Omega_T} \dv \left(\phi_\eps\right) \bigg\{S_{\varepsilon}(\eta_1^\eps \eta_2^\eps \nabla f_0)\bigg\} \cdot \nabla \psi,
\end{aligned}
\end{equation}
where we have used the fact that $f_\eps$ and $f_0$ satisfy the evolution equations in \eqref{eq:equation-for-feps} and \eqref{eq:homogeneqn-f0}, respectively. Owing to the anti-symmetric nature of the third-order tensor (i.e. $\phi_{kij} = - \phi_{ikj}$ for $1\le i,j,k \le d$), it follows that (see \cite[Remark 3.1.3]{shen2018periodic}) for a nice function $\psi$, we have
\[
\frac{\partial}{\partial x_k} \left( \phi_{kij} \right) \frac{\partial \psi}{\partial x_i} = \frac{\partial}{\partial x_k} \left( \phi_{kij} \frac{\partial \psi}{\partial x_i} \right).
\]
Thus, an integration by parts in the last term of the right hand side of \eqref{eq:wkform-1} results in \eqref{eq:wkform-lemma}.
\end{proof}
\begin{remark}\label{rem:zero-avg-periodic}
Observe that for any $\tau\in\mathrm L^2_{\textrm{per}}(Y)$ satisfying 
\[
\int_Y\tau(y)\, {\rm d}y = 0,
\]
there exists a $\varpi\in \mathrm H^2_{\text{per}}(Y)$ such that
\[
\Delta_y \varpi = \tau \quad \mbox{ in }Y.
\]
Taking $g=\nabla_y \varpi$, we deduce that for any $\kappa\in \mathrm H^1_0(\Omega)$,
\[
\int_\Omega \tau\left(\frac{x}{\eps}\right) \kappa(x)\, {\rm d}x
= \eps \int_\Omega \dv\left(g\left(\frac{x}{\eps}\right)\right) \kappa(x) \, {\rm d}x
= - \eps \int_\Omega g\left(\frac{x}{\eps}\right) \cdot \nabla \kappa(x)\, {\rm d}x.
\]
As Sobolev embedding implies that $g\in \mathrm L^{\frac{2d}{d-2}}(\Omega)$, we can get the following bound:
\begin{equation}\label{eq:bound-rmk}
\left\vert \int_\Omega \tau\left(\frac{x}{\eps}\right) \kappa(x)\, {\rm d}x \right\vert 
\le C \eps \left\Vert \nabla \kappa\right\Vert_{\mathrm L^{\frac{2d}{d+2}}(\Omega)}.
\end{equation}
\end{remark}
In the next lemma, we obtain a bound for the left hand side of \eqref{eq:wkform-lemma}.
\begin{lemma}\label{lem:wkform-bound}
Let $w_\eps$ be defined by \eqref{eq:defn-weps}. Then for any $\psi\in\mathrm L^2(0,T;\mathrm H^1_0(\Omega))$, we have
\begin{equation}\label{eq:wkform-bound}
\begin{aligned}
\int_0^T & \left\langle \zeta_\eps\partial_{t}w_{\varepsilon},\psi \right\rangle_{\mathrm{H}^{-1}(\Omega) \times \mathrm{H}_0^1(\Omega)} 
+ \int_{\Omega_T} \Theta_\eps\nabla w_{\varepsilon} \cdot \nabla \psi 
\\
& \le C \eps^{\frac14} \left\{ \left\Vert \partial_tf_0\right\Vert_{L^2(0,T;W^{1,d}(\Omega))} + \left\Vert f_0\right\Vert_{\mathrm L^2(0,T;\mathrm H^2(\Omega))} + \left\Vert f_{{\rm{in}}}\right\Vert_{L^2(\Omega)} \right\}
\left\Vert \psi\right\Vert_{\mathrm L^2(0,T;\mathrm H^1_0(\Omega))}
\end{aligned}
\end{equation}
 \end{lemma}
\begin{proof}
It follows from \eqref{eq:wkform-lemma} that
\begin{equation*}
\begin{aligned}
\int_0^T & \left\langle \zeta_\eps\partial_{t}w_{\varepsilon},\psi \right\rangle_{\mathrm{H}^{-1}(\Omega) \times \mathrm{H}_0^1(\Omega)} 
+ \int_{\Omega_T} \Theta_\eps\nabla w_{\varepsilon} \cdot \nabla \psi 
\le \left\vert \int_{\Omega_T} (\zeta_\eps -1)\partial_{t}f_{0}\psi \right\vert
\\
& \quad + \eps \left\vert \int_{\Omega_T} \zeta_\eps \omega_\eps \cdot \partial_t \left[S_\eps (\eta_1^\eps \eta_2^\eps \nabla f_0)\right] \psi \right\vert
+\left\vert \int_{\Omega_T}\left(\Theta_h - \Theta_\eps\right) \left\{\nabla f_0 - S_\eps(\eta_1^\eps\eta_2^\eps \nabla f_0)\right\} \cdot \nabla \psi\right\vert
\\
& \quad 
+ \eps \left\vert \int_{\Omega_T} \Theta_\eps \nabla \bigg\{S_{\varepsilon}(\eta_1^\eps \eta_2^\eps \nabla f_0)\bigg\}\omega_\eps \cdot \nabla \psi\right\vert
+ \eps \left\vert \int_{\Omega_T} \phi_{kij}\left(\frac{x}{\eps}\right) \frac{\partial}{\partial x_k} \bigg\{S_{\varepsilon}\left(\eta_1^\eps \eta_2^\eps \frac{\partial f_0}{\partial x_j} \right)\bigg\} \frac{\partial\psi}{\partial x_i} \right\vert
\\
& \quad =: I_1 + I_2 + I_3 + I_4 + I_5
\end{aligned}
\end{equation*}
Thanks to the assumption \eqref{eq:zeta-average-1} on $\zeta$, invoking \eqref{eq:bound-rmk} from Remark \ref{rem:zero-avg-periodic} we arrive at the following bound:
\begin{equation*}
\begin{aligned}
I_1 
\le C \eps \int_0^T & \left\Vert \nabla \left( \partial_t f_0 \psi \right) \right\Vert_{\mathrm L^{\frac{2d}{d+2}}(\Omega)}
\le C \eps \left\{ \int_0^T \left\Vert \psi \nabla\partial_t f_0 \right\Vert_{\mathrm L^{\frac{2d}{d+2}}(\Omega)} 
+ \int_0^T \left\Vert \partial_t f_0 \nabla \psi \right\Vert_{\mathrm L^{\frac{2d}{d+2}}(\Omega)}\right\}
\\
& \le C \eps \left\{ \int_0^T \left\Vert \nabla\partial_t f_0 \right\Vert_{\mathrm L^d(\Omega)} \left\Vert \psi \right\Vert_{\mathrm L^2(\Omega)}  
+ \int_0^T \left\Vert \partial_t f_0 \right\Vert_{\mathrm L^d(\Omega)} \left\Vert \nabla \psi \right\Vert_{\mathrm L^2(\Omega)}\right\}
\\
& \le C \eps \left\{ \int_0^T \left\Vert \nabla\partial_t f_0 \right\Vert_{\mathrm L^d(\Omega)} \left\Vert \nabla \psi \right\Vert_{\mathrm L^2(\Omega)}  
+ \int_0^T \left\Vert \partial_t f_0 \right\Vert_{\mathrm L^d(\Omega)} \left\Vert \nabla \psi \right\Vert_{\mathrm L^2(\Omega)}\right\}
\\
& \le C \eps \left\{ \left( \int_0^T \left\Vert \nabla\partial_t f_0 \right\Vert^2_{\mathrm L^d(\Omega)}\right)^\frac12
+ \left( \int_0^T \left\Vert \partial_t f_0 \right\Vert^2_{\mathrm L^d(\Omega)} \right)^\frac12
\right\}
\left\Vert \psi\right\Vert_{\mathrm L^2(0,T;\mathrm H^1_0(\Omega))}
\\
& \le C \eps\left\Vert \partial_t f_0 \right\Vert_{L^2(0,T;\mathrm W^{1,d}(\Omega))} \left\Vert \psi\right\Vert_{\mathrm L^2(0,T;\mathrm H^1_0(\Omega))} .
\end{aligned}
\end{equation*}
Note that we have employed the H\"older inequality in the third and the fifth step while the Poincar\'e inequality in $\mathrm H^1_0(\Omega)$ is used in the fourth step. In order to handle the $I_2$ term, we recall from \eqref{eq:cellpb-Theta} that the product $\zeta \omega$ averages to zero over the unit cell $Y$. Thus, Remark \ref{rem:zero-avg-periodic} yields the existence of a matrix-valued function $\mathcal{D}$ (need not be skew-symmetric) bounded in $\mathrm L^\infty(Y)$ (thanks to elliptic regularity) such that
\begin{equation*}
\eps \int_{\Omega_T} \zeta_\eps \omega_\eps \cdot \partial_t \left\{S_\eps (\eta_1^\eps \eta_2^\eps \nabla f_0)\right\} \psi 
= \eps^2 \int_{\Omega_T} \frac{\partial}{\partial x_i} \left( \mathcal{D}_{ij} \left(\frac{x}{\eps}\right)\right) \partial_t \left[S_\eps \left(\eta_1^\eps \eta_2^\eps \frac{\partial f_0}{\partial x_j}\right)\right] \psi
\end{equation*}
On the right hand side of the above expression, performing an integration by parts leads to
\begin{equation*}
- \eps^2 \int_{\Omega_T} \mathcal{D}_{ij} \left(\frac{x}{\eps}\right) \frac{\partial^2}{\partial x_i\partial t}\left[S_\eps \left(\eta_1^\eps \eta_2^\eps \frac{\partial f_0}{\partial x_j}\right)\right] \psi
- \eps^2 \int_{\Omega_T} \mathcal{D}_{ij} \left(\frac{x}{\eps}\right) \partial_t\left[S_\eps \left(\eta_1^\eps \eta_2^\eps \frac{\partial f_0}{\partial x_j}\right)\right] \frac{\partial \psi}{\partial x_i}
\end{equation*}
Hence it follows that
\begin{equation*}
\begin{aligned}
I_2 & \le \eps^2 \left\vert \int_{\Omega_T} \mathcal{D}_{ij} \left(\frac{x}{\eps}\right) \frac{\partial^2}{\partial x_i\partial t}\left[S_\eps \left(\eta_1^\eps \eta_2^\eps \frac{\partial f_0}{\partial x_j}\right)\right] \psi \right\vert
+ \eps^2 \left\vert \int_{\Omega_T} \mathcal{D}_{ij} \left(\frac{x}{\eps}\right) \partial_t\left[S_\eps \left(\eta_1^\eps \eta_2^\eps \frac{\partial f_0}{\partial x_j}\right)\right] \frac{\partial \psi}{\partial x_i} \right\vert
\\
& =: J_1 + J_2
\end{aligned}
\end{equation*}
Note that 
\begin{equation*}
\begin{aligned}
\partial_t\left[S_\eps \left(\eta_1^\eps \eta_2^\eps \frac{\partial f_0}{\partial x_j}\right)\right] 
& = S_\eps \left(\partial_t\left(\eta_1^\eps \eta_2^\eps\right) \frac{\partial f_0}{\partial x_j}\right)
+ S_\eps \left(\eta_1^\eps \eta_2^\eps \frac{\partial^2 f_0}{\partial t\partial x_j}\right)
\\
& = S_\eps \left(\partial_t\left(\eta_1^\eps \eta_2^\eps\right) \frac{\partial f_0}{\partial x_j}\right)
+ \frac{\partial}{\partial x_j} S_\eps \left(\eta_1^\eps \eta_2^\eps \frac{\partial f_0}{\partial t}\right)
- S_\eps \left(\frac{\partial}{\partial x_j}\left(\eta_1^\eps \eta_2^\eps\right) \frac{\partial f_0}{\partial t}\right).
\end{aligned}
\end{equation*}
Therefore, we have
\begin{equation*}
\begin{aligned}
J_2 & \le C \eps^2 \Bigg\{ 
\left\Vert S_\eps \Big(\partial_t\left(\eta_1^\eps \eta_2^\eps\right) \nabla f_0\Big) \right\Vert_{\mathrm L^2(\Omega_T)}
+ \left\Vert \nabla \left( S_\eps \left(\eta_1^\eps \eta_2^\eps \frac{\partial f_0}{\partial t}\right) \right)\right\Vert_{\mathrm L^2(\Omega_T)}
\\
& \hspace{6.0 cm}+ \left\Vert S_\eps \left(\nabla\left(\eta_1^\eps \eta_2^\eps\right) \frac{\partial f_0}{\partial t}\right)\right\Vert_{\mathrm L^2(\Omega_T)}
\Bigg\} \left\Vert \nabla\psi \right\Vert_{\mathrm L^2(\Omega_T)}
\\
& \le C \eps^2 \Bigg\{ \frac{1}{\eps} \left\Vert \nabla f_0 \right\Vert_{\mathrm L^2(\Omega_T)}
+ \frac{1}{\eps} \left\Vert \frac{\partial f_0}{\partial t} \right\Vert_{\mathrm L^2(\Omega_T)}
+ \frac{1}{\sqrt{\eps}} \left\Vert \frac{\partial f_0}{\partial t} \right\Vert_{\mathrm L^2(\Omega_T)}
\Bigg\} \left\Vert \nabla\psi \right\Vert_{\mathrm L^2(\Omega_T)},
\end{aligned}
\end{equation*}
where we have used the Lemma \ref{smooth: compact support} from the Appendix. Note that
\begin{equation*}
\begin{aligned}
\frac{\partial^2}{\partial x_i\partial t} & \left[S_\eps \left(\eta_1^\eps \eta_2^\eps \frac{\partial f_0}{\partial x_j}\right)\right] 
= S_\eps \left(\frac{\partial^2}{\partial x_i\partial t}\left(\eta_1^\eps \eta_2^\eps\right) \frac{\partial f_0}{\partial x_j}\right)
+ S_\eps \left(\frac{\partial}{\partial t}\left(\eta_1^\eps \eta_2^\eps\right) \frac{\partial^2 f_0}{\partial x_i \partial x_j}\right)
\\
& \qquad \qquad \qquad \qquad + S_\eps \left(\frac{\partial}{\partial x_i}\left(\eta_1^\eps \eta_2^\eps\right) \frac{\partial^2 f_0}{\partial t \partial x_j}\right) 
+ S_\eps \left( \eta_1^\eps \eta_2^\eps \frac{\partial}{\partial x_i} \left(\frac{\partial^2 f_0}{\partial t \partial x_j}\right)\right)
\\
& = S_\eps \left(\frac{\partial^2}{\partial x_i\partial t}\left(\eta_1^\eps \eta_2^\eps\right) \frac{\partial f_0}{\partial x_j}\right)
+ \frac{\partial}{\partial x_i} \left( S_\eps \left( \eta_1^\eps \eta_2^\eps \frac{\partial^2 f_0}{\partial t \partial x_j}\right)\right)
+ S_\eps \left(\frac{\partial}{\partial t}\left(\eta_1^\eps \eta_2^\eps\right) \frac{\partial^2 f_0}{\partial x_i \partial x_j}\right).
\end{aligned}
\end{equation*}
Hence we have
\begin{equation*}
\begin{aligned}
J_1 & \le C \eps^2 \Bigg\{
\left\Vert S_\eps \Big(\partial_t\nabla \left(\eta_1^\eps \eta_2^\eps\right) \nabla f_0 \Big)\right\Vert_{\mathrm L^2(\Omega_T)}
+ \left\Vert \nabla \left(S_\eps \Big(\eta_1^\eps \eta_2^\eps \partial_t\nabla f_0 \Big)\right)\right\Vert_{\mathrm L^2(\Omega_T)}
\\
& \hspace{6.5 cm}+ \left\Vert S_\eps \Big( \partial_t \left(\eta_1^\eps \eta_2^\eps\right) \nabla^2 f_0 \Big)\right\Vert_{\mathrm L^2(\Omega_T)}
\Bigg\} \left\Vert \psi \right\Vert_{\mathrm L^2(\Omega_T)}
\\
& \le C \eps^2 \Bigg\{ \frac{1}{\eps^{\frac32}} \left\Vert \nabla f_0 \right\Vert_{\mathrm L^2(\Omega_T)}
+ \frac{1}{\eps} \left\Vert \partial_t\nabla f_0 \right\Vert_{\mathrm L^2(\Omega_T)}
+ \frac{1}{\eps} \left\Vert \nabla^2 f_0 \right\Vert_{\mathrm L^2(\Omega_T)}
\Bigg\} \left\Vert \psi \right\Vert_{\mathrm L^2(\Omega_T)}.
\end{aligned}
\end{equation*}
The above obtained estimates on $J_1$ and $J_2$ together will result in the following bound for $I_2$:
\begin{equation*}
I_2 \leq C \sqrt{\eps} \big\{ \left\Vert \partial_t f_0 \right\Vert_{\mathrm L^2(0,T;\mathrm W^{1,d}(\Omega))}  + \left\Vert f_0 \right\Vert_{\mathrm L^2(0,T;\mathrm H^2(\Omega))} \big\} \left\Vert \nabla \psi \right\Vert_{\mathrm L^2(\Omega_T)}
\end{equation*}
Note that the product $\eta_1^\eps(x)\eta_2^\eps(t)$ takes the value one on $\Omega_T\setminus \Omega_{T,4\sqrt{\eps}}$. Hence we have
\begin{equation*}
\begin{aligned}
I_3 & \le \left\vert \int_{\Omega_{T,4\sqrt{\eps}}}\left(\Theta_h - \Theta_\eps\right) \left\{\nabla f_0 - S_\eps(\eta_1^\eps\eta_2^\eps \nabla f_0)\right\} \cdot \nabla \psi\right\vert
+ \left\vert \int_{\Omega_T\setminus\Omega_{T,4\sqrt{\eps}}}\left(\Theta_h - \Theta_\eps\right) \left\{\nabla f_0 - S_\eps(\nabla f_0)\right\} \cdot \nabla \psi\right\vert
\\
& \le C  \left\Vert \nabla f_0 - S_\eps(\eta_1^\eps\eta_2^\eps \nabla f_0) \right\Vert_{\mathrm L^2(\Omega_{T,4\sqrt{\eps}})}
\left\Vert \nabla \psi \right\Vert_{\mathrm L^2(\Omega_{T,4\sqrt{\eps}})}
\\
& \hspace{6.0cm} + C \left\Vert \nabla f_0 - S_\eps(\nabla f_0) \right\Vert_{\mathrm L^2(\Omega_T\setminus\Omega_{T,4\sqrt{\eps}})}
\left\Vert \nabla \psi \right\Vert_{\mathrm L^2(\Omega_T\setminus\Omega_{T,4\sqrt{\eps}})}
\\
& =: J_3 + J_4
\end{aligned}
\end{equation*}
Invoking the Calder\'on's extension theorem for $f_0\in \mathrm L^2(0,T;\mathrm H^2(\Omega))\cap\mathrm H^1(0,T;\mathrm L^2(\Omega))$, we can find a $\widetilde{f_0}:\RR^{d+1}\to\RR$ such that
\begin{equation*}
\left\Vert \nabla^2 \widetilde{f_0} \right\Vert_{\mathrm L^2(\RR^{d+1})}
+ \left\Vert \partial_t \widetilde{f_0} \right\Vert_{\mathrm L^2(\RR^{d+1})}
\le C \Big\{
\left\Vert f_0 \right\Vert_{\mathrm L^2(0,T;\mathrm H^2(\Omega))} 
+ \left\Vert \partial_t f_0 \right\Vert_{\mathrm L^2(\Omega_T)}
\Big\}
\end{equation*}
Using this, we can bound $J_4$ as follows:
\begin{equation*}
\begin{aligned}
J_4 & \le C \left\Vert \nabla \widetilde{f_0} - S_\eps(\nabla \widetilde{f_0}) \right\Vert_{\mathrm L^2(\RR^{d+1})}
\left\Vert \nabla \psi \right\Vert_{\mathrm L^2(\Omega_T)}
\\
& \le C \eps \Big\{
\left\Vert \nabla^2 \widetilde{f_0} \right\Vert_{\mathrm L^2(\RR^{d+1})}
+ \left\Vert \partial_t \widetilde{f_0} \right\Vert_{\mathrm L^2(\RR^{d+1})}
\Big\}
\left\Vert \nabla \psi \right\Vert_{\mathrm L^2(\Omega_T)}
\\
& \le C \eps \Big\{
\left\Vert f_0 \right\Vert_{\mathrm L^2(0,T;\mathrm H^2(\Omega))} 
+ \left\Vert \partial_t f_0 \right\Vert_{\mathrm L^2(\Omega_T)}
\Big\}
\left\Vert \nabla \psi \right\Vert_{\mathrm L^2(\Omega_T)} 
\\
& \le C \eps \Big\{
\left\Vert f_0 \right\Vert_{\mathrm L^2(0,T;\mathrm H^2(\Omega))} 
+ \left\Vert \partial_t f_0 \right\Vert_{\mathrm L^2(0,T;W^{1,d}(\Omega))}
\Big\}
\left\Vert \nabla \psi \right\Vert_{\mathrm L^2(\Omega_T)}, 
\end{aligned}
\end{equation*}
where we have used Lemma \ref{smooth: error} from the Appendix and have employed the H\"older inequality in the last step. Using a property of $S_\eps$ (see Lemma \ref{smooth: extension} from the Appendix), we arrive at the following bound:
\begin{equation*}
\begin{aligned}
J_3 & \le 
C \left\Vert \nabla f_0 \right\Vert_{\mathrm L^2(\Omega_{T,5\sqrt{\eps}})}
\left\Vert \nabla \psi \right\Vert_{\mathrm L^2(\Omega_T)}
\\
& = C \left\{ \left( \int_0^T \int_{\Omega_{5\sqrt{\eps}}} \left\vert \nabla f_0 \right\vert^2 \right)^{\frac12}
+ \left( \int_0^{25\eps} \int_\Omega \left\vert \nabla f_0 \right\vert^2 \right)^{\frac12}
+ \left( \int_{T-25\eps}^T \int_\Omega \left\vert \nabla f_0 \right\vert^2 \right)^{\frac12}
\right\}
\left\Vert \nabla \psi \right\Vert_{\mathrm L^2(\Omega_T)}
\\
& \le C \eps^{\frac14} \Bigg\{ \left\Vert f_0 \right\Vert_{\mathrm L^2(0,T;H^2(\Omega))}
+ \sup_{25\eps < t < T} \left(\frac{1}{5\sqrt{\eps}} \int_{t-25\eps}^t \int_\Omega \left\vert \nabla f_0 \right\vert^2 \right)^{\frac12}
\Bigg\} \left\Vert \nabla \psi \right\Vert_{\mathrm L^2(\Omega_T)} 
\\
& \le C \eps^{\frac14} \left\{ \left\Vert f_0 \right\Vert_{\mathrm L^2(0,T;H^2(\Omega))}
+ \left\Vert \partial_t f_0 \right\Vert_{\mathrm L^2(\Omega_T)}
+ \left\Vert  f_{\text{in}} \right\Vert_{\mathrm L^2(\Omega)}
\right\} \left\Vert \nabla \psi \right\Vert_{\mathrm L^2(\Omega_T)} 
\\
& \le C \eps^{\frac14} \left\{ \left\Vert f_0 \right\Vert_{\mathrm L^2(0,T;H^2(\Omega))}
+ \left\Vert \partial_t f_0 \right\Vert_{\mathrm L^2(0,T;W^{1,d}(\Omega))}
+ \left\Vert  f_{\text{in}} \right\Vert_{\mathrm L^2(\Omega)}
\right\} \left\Vert \nabla \psi \right\Vert_{\mathrm L^2(\Omega_T)} ,
\end{aligned}
\end{equation*}
where we have used the fact \cite[Remark 3.6, page 2106]{geng2017convergence} that for any $g\in\mathrm H^1(\Omega)$, 
\begin{equation*}
\left\Vert g \right\Vert_{\mathrm L^2(\Omega_\delta)} \le C \sqrt{\delta} \left\Vert g \right\Vert_{\mathrm H^1(\Omega)}
\end{equation*}
and the following estimate \cite[Remark 3.10, page 2108]{geng2017convergence}:
\begin{equation*}
\sup_{25\eps < t < T} \left(\frac{1}{5\sqrt{\eps}} \int_{t-25\eps}^t \int_\Omega \left\vert \nabla f_0 \right\vert^2 \right)^{\frac12}  \le C \left\{\left\Vert \partial_t f_0 \right\Vert_{\mathrm L^2(\Omega_T)} + \left\Vert  f_{\text{in}} \right\Vert_{\mathrm L^2(\Omega)} \right\}
\end{equation*}
Putting together the above estimates on $J_3$ and $J_4$ results in the following bound on $I_3$:
\begin{equation*}
I_3 \le C \eps^{\frac14}\left\{
\left\Vert f_0 \right\Vert_{\mathrm L^2(0,T;H^2(\Omega))}
+ \left\Vert \partial_t f_0 \right\Vert_{\mathrm L^2(0,T;W^{1,d}(\Omega))}
+ \left\Vert  f_{\text{in}} \right\Vert_{\mathrm L^2(\Omega)}
\right\} \left\Vert \nabla \psi \right\Vert_{\mathrm L^2(\Omega_T)}.
\end{equation*}
To estimate $I_4$, note that
\begin{equation}\label{eq:grad-on-Seps}
\nabla \bigg\{S_{\varepsilon}(\eta_1^\eps \eta_2^\eps \nabla f_0)\bigg\}
= S_{\varepsilon}\Big(\nabla \left(\eta_1^\eps \eta_2^\eps\right) \nabla f_0\Big)
+ S_{\varepsilon}\Big(\eta_1^\eps \eta_2^\eps \nabla^2 f_0\Big).
\end{equation}
Hence $I_4$ can be rewritten as
\begin{equation*}
\begin{aligned}
I_4 & \le \eps \left\vert \int_{\Omega_T} \Theta_\eps S_{\varepsilon}\Big(\nabla \left(\eta_1^\eps \eta_2^\eps\right) \nabla f_0\Big) \omega_\eps \cdot \nabla \psi\right\vert
+ \eps \left\vert \int_{\Omega_T} \Theta_\eps S_{\varepsilon}\Big(\eta_1^\eps \eta_2^\eps \nabla^2 f_0\Big) \omega_\eps \cdot \nabla \psi\right\vert
\\
& \le C \eps \Bigg\{ 
\left\Vert S_{\varepsilon}\Big(\nabla \left(\eta_1^\eps \eta_2^\eps\right) \nabla f_0\Big) \right\Vert_{\mathrm L^2(\Omega_{T,5\sqrt{\eps}})}
+ \left\Vert S_{\varepsilon}\Big(\eta_1^\eps \eta_2^\eps \nabla^2 f_0\Big) \right\Vert_{\mathrm L^2(\Omega_T)}
\Bigg\} \left\Vert \nabla \psi \right\Vert_{\mathrm L^2(\Omega_T)}
\\
& \le C \eps \Bigg\{
\frac{1}{\sqrt{\eps}} \left\Vert \nabla f_0 \right\Vert_{\mathrm L^2(\Omega_T)}
+ \left\Vert \nabla^2 f_0 \right\Vert_{\mathrm L^2(\Omega_T)}
\Bigg\} \left\Vert \nabla \psi \right\Vert_{\mathrm L^2(\Omega_T)}
\\
& \le C \eps^{\frac12}
\left\Vert f_0 \right\Vert_{\mathrm L^2(0,T; H^2(\Omega))}
\left\Vert \nabla \psi \right\Vert_{\mathrm L^2(\Omega_T)},
\end{aligned}
\end{equation*}
where we have used Lemma \ref{smooth: compact support} from the Appendix. Using the property \eqref{eq:grad-on-Seps}, and arguing as we did in the case of $I_4$, we can arrive at the following bound for $I_5$:
\begin{equation*}
I_5 \le C \eps^{\frac12}
\left\Vert f_0 \right\Vert_{\mathrm L^2(0,T; H^2(\Omega))}
\left\Vert \nabla \psi \right\Vert_{\mathrm L^2(\Omega_T)}
\end{equation*}
Collecting all the bounds derived above, we arrive at \eqref{eq:wkform-bound}. Hence the proof.
\end{proof}
We are now equipped to prove Theorem \ref{thm:homogen-feps}.
\begin{proof}[Proof of Theorem \ref{thm:homogen-feps}]
As $w_\eps\in \mathrm L^2(0,T;\mathrm H^1_0(\Omega))$, we take $\psi=w_\eps$ in the estimate \eqref{eq:wkform-bound} obtained in the previous lemma to arrive at the following inequality:
\begin{equation*}
\begin{aligned}
\int_0^T & \left\langle \zeta_\eps\partial_{t}w_{\varepsilon}, w_\eps \right\rangle_{\mathrm{H}^{-1}(\Omega) \times \mathrm{H}_0^1(\Omega)} 
+ \int_{\Omega_T} \Theta_\eps\nabla w_{\varepsilon} \cdot \nabla w_\eps 
\\
& \le C \eps^{\frac14} \left\{ \left\Vert \partial_tf_0\right\Vert_{L^2(0,T;W^{1,d}(\Omega))} + \left\Vert f_0\right\Vert_{\mathrm L^2(0,T;\mathrm H^2(\Omega))} + \left\Vert f_{{\rm{in}}}\right\Vert_{L^2(\Omega)} \right\}
\left\Vert w_\eps\right\Vert_{\mathrm L^2(0,T;\mathrm H^1_0(\Omega))}.
\end{aligned}
\end{equation*}
Using the coercivity assumption on $\Theta_\eps$, the left hand side of the above inequality can be bounded from below by 
\begin{equation*}
\int_0^T \frac{{\rm d}}{{\rm d}t} \left\Vert \sqrt{\zeta_\eps} w_\eps \right\Vert^2_{\mathrm L^2(\Omega)}
+ \kappa  \left\Vert \nabla w_\eps \right\Vert^2_{\mathrm L^2(\Omega_T)}
= \left\Vert \sqrt{\zeta_\eps} w_\eps(T,\cdot) \right\Vert^2_{\mathrm L^2(\Omega)}
+ \kappa  \left\Vert \nabla w_\eps \right\Vert^2_{\mathrm L^2(\Omega_T)}.
\end{equation*}
This helps us deduce
\begin{equation*}
\begin{aligned}
\left\Vert w_\eps\right\Vert_{\mathrm L^2(0,T;\mathrm H^1_0(\Omega))}
& \le C \eps^{\frac14} \left\{ \left\Vert \partial_tf_0\right\Vert_{L^2(0,T;W^{1,d}(\Omega))} + \left\Vert f_0\right\Vert_{\mathrm L^2(0,T;\mathrm H^2(\Omega))} + \left\Vert f_{{\rm{in}}}\right\Vert_{L^2(\Omega)} \right\}.
\end{aligned}
\end{equation*}
Thanks to Poincar\'e inequality, we have
\begin{equation*}
\left\Vert w_\eps\right\Vert_{\mathrm L^2(\Omega_T)}
\le C
\left\Vert w_\eps\right\Vert_{\mathrm L^2(0,T;\mathrm H^1_0(\Omega))}
\end{equation*}
Using the definition \eqref{eq:defn-weps} of $w_\eps$, we have
\begin{equation*}
\begin{aligned}
\left\Vert f_\eps - f_0 \right\Vert_{\mathrm L^2(\Omega_T)} 
& \le \left\Vert w_\eps\right\Vert_{\mathrm L^2(\Omega_T)}
+ \eps \left\Vert \omega\left(\frac{x}{\eps}\right)\cdot S_\eps \left( \eta_1^\eps \eta_2^\eps \nabla f_0 \right) \right\Vert_{\mathrm L^2(\Omega_T)}
\\
& \le \left\Vert w_\eps\right\Vert_{\mathrm L^2(\Omega_T)}
+ \eps \left\Vert \nabla f_0 \right\Vert_{\mathrm L^2(\Omega_T)}
\end{aligned}
\end{equation*}
Hence the estimate follows.
\end{proof}
\begin{remark}\label{rem:improved-rate}
By optimizing on the supports of the cut-off functions used in the above proof, we can improve upon the rate obtained in \eqref{eq:L2-estimate}. More precisely, if we were to take the space cut-off function to be
\[
\eta_1^\eps = 1 \quad \mbox{ in }\quad \Omega \backslash \Omega_{4{\varepsilon}^{ 4/7}},
\qquad
\eta_1^\eps = 0 \quad \mbox{ in }\quad \Omega_{3{\varepsilon}^{ 4/7}},
\qquad
\left\vert\nabla \eta_1^\eps \right\vert \leq \frac{C}{{\eps}^{ 4/7}} \quad \mbox{ in }\quad \Omega,
\]
and the time cut-off function to be
\[
\eta_2^\eps = 1 \quad \mbox{ in }\, \left(8\eps^{ 8/7}, T-8\eps^{ 8/7}\right),
\qquad
\eta_2^\eps = 0 \quad \mbox{ in }\, (0,4\eps^{ 8/7}]\cup(T-4\eps^{ 8/7},T),
\qquad 
\left\vert \partial_t \eta_2^\eps\right\vert \leq \frac{C}{\eps^{ 8/7}} \quad \mbox{ in }\, (0,T),
\]
and repeating the above arguments will lead to the improved estimate:
\begin{equation*}
\begin{aligned}
\left \Vert f_\eps - f_0 \right\Vert_{\mathrm L^2(\Omega_T)}  
&
\le C \eps^{\frac 27} \left\{ \left\Vert \partial_tf_0\right\Vert_{L^2(0,T;W^{1,d}(\Omega))} + \left\Vert f_0\right\Vert_{\mathrm L^2(0,T;\mathrm H^2(\Omega))} + \left\Vert f_{{\rm{in}}}\right\Vert_{L^2(\Omega)} \right\}. 
\end{aligned}
\end{equation*}
\end{remark}
\section{The case of regular coefficients}\label{sec:3}
In this section, the rate obtained in Theorem \ref{thm:homogen-feps} is improved by imposing an additional regularity assumption on the coefficient $\zeta$.
\begin{theorem}\label{thm:homogen-feps better}
Suppose $\zeta \in \mathrm H_{per}^1(Y) \cap \mathrm L_{per}^{\infty}(Y)$ satisfies \eqref{eq:zeta-average-1}.
Let $f_{\eps}$ be the solution to \eqref{eq:equation-for-feps} and let $f_0$ be the solution to the corresponding homogenized problem \eqref{eq:homogeneqn-f0}. Then there exists a positive constant $C$, independent of $\varepsilon$, such that the following estimate holds:
\begin{equation}\label{eq:L2-estimate better}
\begin{aligned}
\left\Vert f_\eps - f_0 \right\Vert_{\mathrm L^2(\Omega_T)} 
&  \le C \eps^{\frac12} \left\{ \left\Vert \partial_tf_0\right\Vert_{L^2(0,T;W^{1,d}(\Omega))} + \left\Vert f_0\right\Vert_{\mathrm L^2(0,T;\mathrm H^2(\Omega))} + \left\Vert f_{{\rm{in}}}\right\Vert_{L^2(\Omega)} \right\}.
\end{aligned}
\end{equation}
\end{theorem}
Let $P:Y\to\RR^{d\times d}$ be defined as
\begin{equation}\label{eq:matrix-P}
P := \Theta(y) + \Theta(y)\nabla_y\omega(y) - \Theta_h \qquad \mbox{ in }Y.
\end{equation}
Note that $P$ is $Y$-periodic. We shall introduce a new time scale $\tau=\eps^{-2}t$ and consider the following $(d+1)\times d$ sized matrix $H$ defined as follows:
\begin{equation}
H_{ij} := 
\left\{
\begin{aligned}
P_{ij} & \qquad \mbox{ for }1\le i,j\le d,
\\
-\zeta(y)\omega_j(y) & \qquad \mbox{ for }i=d+1 \mbox{ and }1\le j\le d.
\end{aligned}
\right.
\end{equation}
Observe that 
\begin{equation*}
\sum_{i=1}^d \frac{\partial}{\partial y_i}H_{ij} + \frac{\partial}{\partial\tau}H_{(d+1)j} = 0 \qquad \mbox{ for }j=1,\dots,d,
\end{equation*}
and
\begin{equation*}
\int_0^1 \int_Y H_{ij}(y) \, {\rm d}y \, {\rm d}\tau = 0 \qquad \mbox{ for }1\le i\le d+1,\quad 1\le j\le d. 
\end{equation*}
Hence there exists a third order tensor $\mathfrak{B}$ (see \cite[Proposition 3.1.1, page 35]{shen2018periodic}) such that
\begin{equation}\label{eq:3rd-order-tensor}
\frac{\partial \mathfrak{B}_{(d+1)ij}}{\partial \tau} + \sum_{k=1}^d \frac{\partial \mathfrak{B}_{kij}}{\partial y_k} = H_{ij} \qquad \mbox{ for }1\le i\le d+1, \quad 1\le j\le d,
\end{equation}
and
\begin{equation}\label{eq:B-skewsym}
\mathfrak{B}_{kij} = -\mathfrak{B}_{ikj} \qquad \mbox{ for }1\le i,k\le d+1, \qquad 1\le j\le d.
\end{equation}
Similar to the definition of $w_\eps$ in \eqref{eq:defn-weps}, we define $z_\eps:\Omega_T\to\RR$ as follows:
\begin{equation}\label{eq:defn-zeps}
z_\eps := f_\eps - f_0 
- \eps \omega\left(\frac{x}{\eps}\right)\cdot S_\eps \left( \eta_3^\eps \eta_4^\eps \nabla f_0 \right)
- \eps^2 \sum_{i,j=1}^d \frac{\mathfrak{B}_{(d+1)ij}\xeps}{\zeta\xeps} \frac{\partial}{\partial x_i} \left(S_\eps \left( \eta_3^\eps \eta_4^\eps \frac{\partial f_0}{\partial x_j}\right)\right),
\end{equation}
where $\eta_3^\eps\in\mathrm{C}_c^{\infty}(\Omega;[0,1])$ and $\eta_4^\eps\in \mathrm{C}_c^\infty(0,T;[0,1])$ are the space and time cut-off functions that satisfy
\[
\eta_3^\eps = 1 \quad \mbox{ in }\quad \Omega \backslash \Omega_{4\varepsilon},
\qquad
\eta_3^\eps = 0 \quad \mbox{ in }\quad \Omega_{3\varepsilon},
\qquad
\left\vert\nabla \eta_3^\eps \right\vert \leq \frac{1}{\eps} \quad \mbox{ in }\quad \Omega.
\]
\[
\eta_4^\eps = 1 \quad \mbox{ in }\, \left(8\eps^2, T-8\eps^2\right),
\qquad
\eta_4^\eps = 0 \quad \mbox{ in }\, (0,4\eps^2]\cup(T-4\eps^2,T),
\qquad 
\left\vert \partial_t \eta_4^\eps\right\vert \leq \frac{1}{\eps^2} \quad \mbox{ in }\, (0,T).
\]
Note that $z_\eps\in\mathrm L^2(0,T;\mathrm H^1_0(\Omega))$ and $z_\eps(0,\cdot)\equiv0$. Further, a direct computation leads to
\begin{equation*}
\begin{aligned}
\zeta_\eps\partial_{t}z_{\varepsilon}
= \zeta_\eps \partial_t f_\eps 
- \zeta_\eps \partial_t f_0 
& - \eps \zeta_\eps \omega_\eps\cdot \partial_t \left\{S_\eps \left( \eta_3^\eps \eta_4^\eps \nabla f_0 \right)\right\}
\\ 
& - \eps^2 \frac{\partial}{\partial t} \left( \sum_{i,j=1}^d \mathfrak{B}_{(d+1)ij}\xeps \frac{\partial}{\partial x_i} \left(S_\eps \left( \eta_3^\eps \eta_4^\eps \frac{\partial f_0}{\partial x_j}\right)\right)\right)
\end{aligned}
\end{equation*}
and
\begin{equation*}
\begin{aligned}
\Theta_\eps \nabla z_\eps 
& = \Theta_\eps \nabla f_\eps
- \Theta_h \nabla f_0 
+ \left(\Theta_h - \Theta_\eps\right) \left( \nabla f_0 - S_\eps \left( \eta_3^\eps \eta_4^\eps \nabla f_0 \right) \right)
- P\xeps S_\eps \left( \eta_3^\eps \eta_4^\eps \nabla f_0 \right)
\\
& \quad 
- \eps \Theta_\eps \nabla \left\{S_\eps \left( \eta_3^\eps \eta_4^\eps \nabla f_0 \right)\right\} \omega_\eps
- \eps^2 \Theta_\eps\nabla \left(\sum_{i,j=1}^d \frac{\mathfrak{B}_{(d+1)ij}\xeps}{\zeta\xeps} \frac{\partial}{\partial x_i} \left(S_\eps \left( \eta_3^\eps \eta_4^\eps \frac{\partial f_0}{\partial x_j}\right)\right)\right).
\end{aligned}
\end{equation*}
Therefore for any $\psi\in\mathrm L^2(0,T;\mathrm H^1_0(\Omega))$, we get
\begin{equation}\label{eq:wk-form-1}
\begin{aligned}
\int_0^T & \left\langle \zeta_\eps\partial_{t}z_{\varepsilon},\psi \right\rangle_{\mathrm{H}^{-1}(\Omega) \times \mathrm{H}_0^1(\Omega)} 
+ \int_{\Omega_T} \Theta_\eps\nabla z_{\varepsilon} \cdot \nabla \psi
= - \int_{\Omega_T} (\zeta_\eps -1)\partial_{t}f_{0}\psi
\\
& - \eps \int_{\Omega_T} \zeta_\eps \omega_\eps\cdot \partial_t \left\{S_\eps \left( \eta_3^\eps \eta_4^\eps \nabla f_0 \right)\right\} \psi
+ \int_{\Omega_T}\left(\Theta_h - \Theta_\eps\right) \left( \nabla f_0 - S_\eps \left( \eta_3^\eps \eta_4^\eps \nabla f_0 \right) \right) \cdot \nabla \psi
\\
& - \eps^2 \int_{\Omega_T} \frac{\partial}{\partial t} \left( \sum_{i,j=1}^d \mathfrak{B}_{(d+1)ij}\xeps \frac{\partial}{\partial x_i} \left(S_\eps \left( \eta_3^\eps \eta_4^\eps \frac{\partial f_0}{\partial x_j}\right)\right)\right) \psi
\\
& + \int_{\Omega_T} \nabla \cdot \left( P\xeps S_\eps \left( \eta_3^\eps \eta_4^\eps \nabla f_0 \right) \right) \psi
- \eps \int_{\Omega_T} \Theta_\eps \nabla \left\{S_\eps \left( \eta_3^\eps \eta_4^\eps \nabla f_0 \right)\right\} \omega_\eps \cdot \nabla \psi
\\
&- \eps^2 \int_{\Omega_T} \Theta_\eps\nabla \left(\sum_{i,j=1}^d \frac{\mathfrak{B}_{(d+1)ij}\xeps}{\zeta\xeps} \frac{\partial}{\partial x_i} \left(S_\eps \left( \eta_3^\eps \eta_4^\eps \frac{\partial f_0}{\partial x_j}\right)\right)\right) \cdot \nabla \psi
\end{aligned}
\end{equation}
Observe that, thanks to $P$ (defined by \eqref{eq:matrix-P}) having divergence-free columns, we have
\begin{equation*}
\begin{aligned}
& -\eps \sum_{j=1}^d \zeta\xeps\omega_j\xeps\partial_t \left\{S_\eps \left( \eta_3^\eps \eta_4^\eps \frac{\partial f_0}{\partial x_j} \right)\right\}
+ \sum_{i,j=1}^d \frac{\partial}{\partial x_i} \left(P_{ij}\xeps S_\eps \left( \eta_3^\eps \eta_4^\eps \frac{\partial f_0}{\partial x_j} \right) \right)
\\
& = - \eps \sum_{j=1}^d \zeta\xeps\omega_j\xeps\partial_t \left\{S_\eps \left( \eta_3^\eps \eta_4^\eps \frac{\partial f_0}{\partial x_j} \right)\right\}
+ \sum_{i,j=1}^d P_{ij}\xeps \frac{\partial}{\partial x_i} \left( S_\eps \left( \eta_3^\eps \eta_4^\eps \frac{\partial f_0}{\partial x_j} \right) \right)
\end{aligned}
\end{equation*}
Employing the third-order skew-symmetric tensor mentioned in \eqref{eq:3rd-order-tensor}, the expression on the right hand side of the above equality becomes
\begin{equation*}
\begin{aligned}
& \eps^2 \sum_{j,k=1}^d \frac{\partial}{\partial x_k} \left( \mathfrak{B}_{k(d+1)j}\xeps\right) \partial_t \left\{S_\eps \left( \eta_3^\eps \eta_4^\eps \frac{\partial f_0}{\partial x_j} \right)\right\}
+ \eps \sum_{i,j,k=1}^d \frac{\partial}{\partial x_k} \left( \mathfrak{B}_{kij}\xeps\right) \frac{\partial}{\partial x_i} \left( S_\eps \left( \eta_3^\eps \eta_4^\eps \frac{\partial f_0}{\partial x_j} \right) \right)
\\
& + \eps^3 \sum_{j=1}^d \frac{\partial}{\partial t} \left( \mathfrak{B}_{(d+1)(d+1)j}\xeps\right) \partial_t \left\{S_\eps \left( \eta_3^\eps \eta_4^\eps \frac{\partial f_0}{\partial x_j} \right)\right\}
+ \eps^2 \sum_{i,j=1}^d  \frac{\partial}{\partial t} \left( \mathfrak{B}_{(d+1)ij}\xeps\right) \frac{\partial}{\partial x_i} \left( S_\eps \left( \eta_3^\eps \eta_4^\eps \frac{\partial f_0}{\partial x_j} \right)\right)
\end{aligned}
\end{equation*}
As the third-order tensor $\mathfrak{B}$ is independent of the $t$ variable, the last two terms of the above expressions should vanish. But, in view of an algebraic manipulations that aids us next, we choose to ignore the apparent $\mathcal{O}(\eps^3)$ term while still retaining the last term. Furthermore, as a simple consequence of the product rule for differentiation, the above expression can be rewritten as
\begin{equation*}
\begin{aligned}
& \eps^2 \sum_{j,k=1}^d \frac{\partial}{\partial x_k} \left( \mathfrak{B}_{k(d+1)j}\xeps \partial_t \left\{S_\eps \left( \eta_3^\eps \eta_4^\eps \frac{\partial f_0}{\partial x_j} \right)\right\}\right)
- \eps^2 \sum_{j,k=1}^d \mathfrak{B}_{k(d+1)j}\xeps \frac{\partial^2}{\partial x_k\partial t}\left\{S_\eps \left( \eta_3^\eps \eta_4^\eps \frac{\partial f_0}{\partial x_j} \right)\right\}
\\
& + \eps \sum_{i,j,k=1}^d \frac{\partial}{\partial x_k} \left( \mathfrak{B}_{kij}\xeps \frac{\partial}{\partial x_i} \left( S_\eps \left( \eta_3^\eps \eta_4^\eps \frac{\partial f_0}{\partial x_j} \right)\right)\right)
- \eps \sum_{i,j,k=1}^d \mathfrak{B}_{kij}\xeps \frac{\partial^2}{\partial x_k\partial x_i} \left\{ S_\eps \left( \eta_3^\eps \eta_4^\eps \frac{\partial f_0}{\partial x_j} \right) \right\}
\\
& + \eps^2 \sum_{i,j=1}^d  \frac{\partial}{\partial t} \left( \mathfrak{B}_{(d+1)ij}\xeps \frac{\partial}{\partial x_i} \left( S_\eps \left( \eta_3^\eps \eta_4^\eps \frac{\partial f_0}{\partial x_j} \right)\right)\right)
- \eps^2 \sum_{i,j=1}^d \mathfrak{B}_{(d+1)ij}\xeps \frac{\partial^2}{\partial x_i\partial t} \left\{ S_\eps \left( \eta_3^\eps \eta_4^\eps \frac{\partial f_0}{\partial x_j} \right) \right\},
\end{aligned}
\end{equation*}
Note further that, owing to the skew-symmetry of $\mathfrak{B}$ (see \eqref{eq:B-skewsym}), the second, the fourth and the sixth terms in the final expression together amount to zero. 

We have thus proved the following result:
\begin{lemma}\label{lemma: higher regular time derivative}
Let $z_\eps$ be defined by \eqref{eq:defn-zeps}. Then for any $\psi \in \mathrm{L}^2(0,T;\mathrm H_0^1(\Omega))$, we have
\begin{equation}\label{eq:wk-form-2}
\begin{aligned}
\int_0^T & \left\langle \zeta_\eps\partial_{t}z_{\varepsilon},\psi \right\rangle_{\mathrm{H}^{-1}(\Omega) \times \mathrm{H}_0^1(\Omega)} 
+ \int_{\Omega_T} \Theta_\eps\nabla z_{\varepsilon} \cdot \nabla \psi
= - \int_{\Omega_T} (\zeta_\eps -1)\partial_{t}f_{0}\psi
\\
& 
+ \int_{\Omega_T}\left(\Theta_h - \Theta_\eps\right) \left( \nabla f_0 - S_\eps \left( \eta_3^\eps \eta_4^\eps \nabla f_0 \right) \right) \cdot \nabla \psi
- \eps \int_{\Omega_T} \Theta_\eps \nabla \left\{S_\eps \left( \eta_3^\eps \eta_4^\eps \nabla f_0 \right)\right\} \omega_\eps \cdot \nabla \psi
\\
&- \eps^2 \int_{\Omega_T} \Theta_\eps\nabla \left(\sum_{i,j=1}^d \frac{\mathfrak{B}_{(d+1)ij}\xeps}{\zeta\xeps} \frac{\partial}{\partial x_i} \left(S_\eps \left( \eta_3^\eps \eta_4^\eps \frac{\partial f_0}{\partial x_j}\right)\right)\right) \cdot \nabla \psi
\\
& - \eps^2 \sum_{j,k=1}^d \int_{\Omega_T} \mathfrak{B}_{k(d+1)j}\xeps \partial_t \left\{S_\eps \left( \eta_3^\eps \eta_4^\eps \frac{\partial f_0}{\partial x_j} \right)\right\} \frac{\partial\psi}{\partial x_k}
\\
& - \eps \sum_{i,j,k=1}^d \int_{\Omega_T} \mathfrak{B}_{kij}\xeps \frac{\partial}{\partial x_i} \left( S_\eps \left( \eta_3^\eps \eta_4^\eps \frac{\partial f_0}{\partial x_j} \right)\right) \frac{\partial\psi}{\partial x_k}.
\end{aligned}
\end{equation}
\end{lemma}
The next result aims to get a bound on the left hand side of \eqref{eq:wk-form-2}.
\begin{lemma}\label{lem:bound-weak-form-2}
Let $z_\eps$ be defined by \eqref{eq:defn-zeps}. Then for any $\psi \in \mathrm{L}^2(0,T;\mathrm H_0^1(\Omega))$, we have
\begin{equation*}
\begin{aligned}
\int_0^T & \left\langle \zeta_\eps\partial_{t}z_{\varepsilon},\psi \right\rangle_{\mathrm{H}^{-1}(\Omega) \times \mathrm{H}_0^1(\Omega)} 
+ \int_{\Omega_T} \Theta_\eps\nabla z_{\varepsilon} \cdot \nabla \psi
\\
& \le C \eps^{\frac12} \left\{ \left\Vert \partial_tf_0\right\Vert_{L^2(0,T;W^{1,d}(\Omega))} + \left\Vert f_0\right\Vert_{\mathrm L^2(0,T;\mathrm H^2(\Omega))} + \left\Vert f_{{\rm{in}}}\right\Vert_{L^2(\Omega)} \right\} \left\Vert \psi \right\Vert_{\mathrm L^2(0,T;\mathrm H^1_0(\Omega))}.
\end{aligned}
\end{equation*}
\end{lemma} 
\begin{proof}
Thanks to the expression in \eqref{eq:wk-form-2}, we obtain 
\begin{equation*}
\begin{aligned}
\int_0^T & \left\langle \zeta_\eps\partial_{t}z_{\varepsilon},\psi \right\rangle_{\mathrm{H}^{-1}(\Omega) \times \mathrm{H}_0^1(\Omega)} 
+ \int_{\Omega_T} \Theta_\eps\nabla z_{\varepsilon} \cdot \nabla \psi
\le \left\vert \int_{\Omega_T} (\zeta_\eps -1)\partial_{t}f_{0}\psi \right\vert
\\
& 
+ \left\vert \int_{\Omega_T}\left(\Theta_h - \Theta_\eps\right) \left( \nabla f_0 - S_\eps \left( \eta_3^\eps \eta_4^\eps \nabla f_0 \right) \right) \cdot \nabla \psi \right\vert
+ \eps \left\vert \int_{\Omega_T} \Theta_\eps \nabla \left\{S_\eps \left( \eta_3^\eps \eta_4^\eps \nabla f_0 \right)\right\} \omega_\eps \cdot \nabla \psi \right\vert
\\
& + \eps^2 \left\vert \int_{\Omega_T} \Theta_\eps\nabla \left(\sum_{i,j=1}^d \frac{\mathfrak{B}_{(d+1)ij}\xeps}{\zeta\xeps} \frac{\partial}{\partial x_i} \left(S_\eps \left( \eta_3^\eps \eta_4^\eps \frac{\partial f_0}{\partial x_j}\right)\right)\right) \cdot \nabla \psi \right\vert
\\
& + \eps^2 \left\vert \sum_{j,k=1}^d \int_{\Omega_T} \mathfrak{B}_{k(d+1)j}\xeps \partial_t \left\{S_\eps \left( \eta_3^\eps \eta_4^\eps \frac{\partial f_0}{\partial x_j} \right)\right\} \frac{\partial\psi}{\partial x_k} \right\vert
\\
& + \eps \left\vert \sum_{i,j,k=1}^d \int_{\Omega_T} \mathfrak{B}_{kij}\xeps \frac{\partial}{\partial x_i} \left( S_\eps \left( \eta_3^\eps \eta_4^\eps \frac{\partial f_0}{\partial x_j} \right)\right) \frac{\partial\psi}{\partial x_k} \right\vert =: \ell_1 + \ell_2 + \ell_3 + \ell_4 + \ell_5 + \ell_6.
\end{aligned}
\end{equation*}
Note that $\ell_1$ is the same as $I_1$ in the proof of Lemma \ref{lem:wkform-bound}. Hence we have
\begin{equation}\label{eq:bd-ell-1}
\ell_1 \le C \eps\left\Vert \partial_t f_0 \right\Vert_{L^2(0,T;\mathrm W^{1,d}(\Omega))} \left\Vert \psi\right\Vert_{\mathrm L^2(0,T;\mathrm H^1_0(\Omega))} .
\end{equation}
Also note that $\ell_2$ is similar to $I_3$ in the proof of Lemma \ref{lem:wkform-bound}, except for the choice of the cut-off functions. Mimicking the arguments that dealt with $I_3$ yields the following bound:
\begin{equation}\label{eq:bd-ell-2}
\ell_2 \le C \eps^{\frac12}\left\{
\left\Vert f_0 \right\Vert_{\mathrm L^2(0,T;H^2(\Omega))}
+ \left\Vert \partial_t f_0 \right\Vert_{\mathrm L^2(0,T;W^{1,d}(\Omega))}
+ \left\Vert  f_{\text{in}} \right\Vert_{\mathrm L^2(\Omega)}
\right\} \left\Vert \nabla \psi \right\Vert_{\mathrm L^2(\Omega_T)}.
\end{equation}
Further, $\ell_3$ is similar to $I_4$ in the proof of Lemma \ref{lem:wkform-bound} (again, except for the choice of the cut-off functions). A repeat of those arguments results in
\begin{equation}\label{eq:bd-ell-3}
\ell_3 \le C \eps^{\frac12}
\left\Vert f_0\right\Vert_{\mathrm L^2(0,T;\mathrm H^2(\Omega))}
\left\Vert \nabla \psi \right\Vert_{\mathrm L^2(\Omega_T)}.
\end{equation}
Note that
\begin{equation}\label{eq:inter-1}
\frac{\partial}{\partial x_i} \left(S_\eps \left( \eta_3^\eps \eta_4^\eps \frac{\partial f_0}{\partial x_j}\right)\right)
= S_\eps \left( \frac{\partial}{\partial x_i}\left(\eta_3^\eps \eta_4^\eps\right) \frac{\partial f_0}{\partial x_j}\right)
+ S_\eps \left( \eta_3^\eps \eta_4^\eps \frac{\partial^2 f_0}{\partial x_i \partial x_j}\right)
\end{equation}
and
\begin{equation}\label{eq:inter-2}
\nabla^2 S_\eps\left(\eta_3^\eps \eta_4^\eps \nabla f_0\right) 
= \nabla S_\eps\left(\nabla\left(\eta^\eps_3 \eta^\eps_4\right) \nabla f_0\right) 
+ \nabla S_\eps\left(\eta^\eps_3 \eta^\eps_4 \nabla^2 f_0\right)
\end{equation}
Considering $\ell_4$, we observe that
\begin{equation*}
\begin{aligned}
\ell_4 & = \eps \left\vert \int_{\Omega_T} \Theta_\eps \left(\sum_{i,j=1}^d \nabla \left(\frac{\mathfrak{B}_{(d+1)ij}}{\zeta}\right)\xeps \frac{\partial}{\partial x_i} \left(S_\eps \left( \eta_3^\eps \eta_4^\eps \frac{\partial f_0}{\partial x_j}\right)\right)\right) \cdot \nabla \psi \right\vert
\\
& \qquad + \eps^2 \left\vert \int_{\Omega_T} \Theta_\eps \left(\sum_{i,j=1}^d \frac{\mathfrak{B}_{(d+1)ij}\xeps}{\zeta\xeps} \nabla \left( \frac{\partial}{\partial x_i} \left(S_\eps \left( \eta_3^\eps \eta_4^\eps \frac{\partial f_0}{\partial x_j}\right)\right)\right)\right) \cdot \nabla \psi \right\vert.
\end{aligned}
\end{equation*}
Using \eqref{eq:inter-1} and \eqref{eq:inter-2} and a few properties of the smoothing operator $S_\eps$ (see Lemma \ref{smooth: compact support} and \ref{smooth: extension} in the Appendix), we obtain the bound:
\begin{equation}\label{eq:bd-ell-4}
\begin{aligned}
\ell_4 & \leq C  \Vert \nabla f_0 \Vert_{\mathrm L^2(\Omega_{T,3\eps})} \Vert \nabla \psi \Vert_{\mathrm L^2(\Omega_{T,3\eps})} 
+ C \eps \Vert \nabla^2 f_0 \Vert_{\mathrm L^2(\Omega_T)} \Vert \nabla \psi \Vert_{\mathrm L^2(\Omega_T)}
\\
& \le C \eps^\frac12 \left\Vert \nabla^2 f_0 \right\Vert_{\mathrm L^2(\Omega_T)} \Vert \nabla \psi \Vert_{\mathrm L^2(\Omega_T)}.
\end{aligned}
\end{equation}
Note that to arrive at the estimate \eqref{eq:bd-ell-4}, we have used the fact that $\frac{\mathfrak{B}_{(d+1)ij}}{\zeta}\in \mathrm H^1(\Omega)$ -- see Lemma \ref{lemma: quotient} in the Appendix. We emphasise here that it is at this point we require the coefficient $\zeta$ to be more regular. The terms $\ell_5$ and $\ell_6$ are similar to $I_5$ in the proof of Lemma \ref{lem:wkform-bound} (again, except for the choice of the cut-off functions) and hence can be handled in a similar fashion while taking into account the identity:
\begin{equation*}
\partial_t \left( S_\eps\left(\eta^\eps_3 \eta^\eps_4 \nabla f_0\right) \right) 
= S_{\eps}(\partial_t(\eta^\eps_3 \eta^\eps_4) \nabla f_0) 
+ \nabla S_{\eps}\left(\eta^\eps_3 \eta^\eps_4\partial_t f_0\right) - S_\eps\left(\nabla \left(\eta^\eps_3 \eta^\eps_4\right) \partial_t f_0\right)
\end{equation*}
As the arguments leading to the rest of the  desire estimates are a repeat of those made earlier, we are skipping the details.
\end{proof}

Finally, the proof of Theorem \ref{thm:homogen-feps better} goes along similar lines as the proof of Theorem \ref{thm:homogen-feps}. Essentially, it amounts to taking $\psi=z_\eps$ in the inequality proved in Lemma \ref{lem:bound-weak-form-2} and argue using Poincar\'e inequality. Again, we are choosing not to give the details in the interest of space.

\section{Concluding remarks}\label{sec:conclude}
We finish this manuscript by commenting on the $\eps$-decay rate \eqref{eq:decay-ueps} for the solution $u_\eps$ to our original model problem \eqref{eq:main} (see Remark \ref{rem:decay-ueps}). As the calculations in Remark \ref{rem:decay-ueps} advocate, the decay rate \eqref{eq:decay-ueps} is guaranteed by the strict positivity of the eigenvalue $\lambda$. On the other hand, if $\lambda=0$, then we have
\begin{equation}\label{eq:u-eps-lambda-0}
u_\eps \to v_0 (t,x) \int_Y \psi(y)\, {\rm d}y \quad \mbox{ weakly in }\mathrm L^2(\Omega_T),
\end{equation}
as $\eps\to0$, where $v_0$ solves the homogenized equation \eqref{eq:homogeneqn-v0} (essentially, no decay in $\eps$). Taking cues from \cite[Remark 2.2, page 737]{allaire2007homogenization}, for the case of $A$ being symmetric and $b=0$, we have $\theta=0$. That is, $\psi$ solves the following $Y$-periodic eigenvalue problem:
\[
-\dv_y ({A}(y)\nabla \psi(y)) + c(y) \psi(y) = \lambda \psi(y) \qquad \mbox{ in }Y.
\]
Here we distinguish two cases:
\begin{itemize}
\item $c(y)\ge \delta >0$: Here we deduce that $\lambda\ge \delta$. Hence the $\mathcal{O}(\eps)$ decay rate \eqref{eq:decay-ueps} holds true.
\item $c(y)\equiv0$: Here we have $\lambda=0$. This leads to the behaviour \eqref{eq:u-eps-lambda-0} as $\eps\to0$.
\end{itemize}
We emphasise here that \cite[Remark 2.2, page 737]{allaire2007homogenization} gives another sufficient condition on the coefficients (so-called central symmetry in $Y$) to result in $\theta=0$, it is unclear whether $\lambda=0$ in such scenarios where the coefficients $b$ and $c$ are nontrivial. It is worth exploring these finer aspects of the $\theta$-exponential eigenvalue problems. 

Finally, we comment on the rates of convergence obtained in this work. The $\mathcal{O}(\eps^\frac27)$ rate for $\left \Vert f_\eps - f_0 \right\Vert_{\mathrm L^2(\Omega_T)}$ is the best one can obtain (see Remark \ref{rem:improved-rate}) using the methodology employed in Section \ref{sec:2} under the assumption that the coefficient $\zeta\in\mathrm L^\infty_{\text{per}}(Y)$. The $\mathcal{O}(\eps^\frac12)$ obtained in Theorem \ref{thm:homogen-feps better} heavily relies on the regularity assumption made on $\zeta$. It would be interesting to improve the rates in Section \ref{sec:2} without requiring weak differentiability of the coefficients. 


\appendix
\section{Few Useful Results}

\begin{lemma}\label{smooth: compact support}
For any function $\phi \in \mathrm{L}^{2}(\Omega_T)$ with $\supp{\phi} \subset \subset \Omega_T$, we have the following
\begin{enumerate}
\item $|| S_{\eps}(\phi)||_{\mathrm{L}^2(\Omega_T)} \leq C ||\phi||_{\mathrm{L}^2(\Omega_T)}$
\item $\eps || \nabla S_{\eps}(\phi)||_{\mathrm{L}^2(\Omega_T)} + \eps^2 || \nabla^2 S_{\eps}(\phi)||_{\mathrm{L}^2(\Omega_T)} \leq C ||\phi||_{\mathrm{L}^2(\Omega_T)}$
\end{enumerate}
where the constant $C$ depends upon dimension $d$.
\end{lemma}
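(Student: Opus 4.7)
The plan is to view $S_\eps$ as a convolution operator on $\RR^{d+1}$ and then apply Young's convolution inequality. First I would extend $\phi$ by zero outside its compact support, so that $S_\eps(\phi)$ can be written as
\[
S_\eps(\phi)(t,x) = (\phi \ast \theta_\eps)(t,x), \qquad \text{where } \theta_\eps(s,y) := \tfrac{1}{\eps^{d+2}}\, \theta\!\left(\tfrac{s}{\eps^2},\tfrac{y}{\eps}\right).
\]
A direct change of variables $(\tau, z) = (s/\eps^2, y/\eps)$ gives
\[
\int_{\RR^{d+1}} \theta_\eps(s,y)\, {\rm d}y\, {\rm d}s = \int_{\RR^{d+1}} \theta(\tau, z)\, {\rm d}z\, {\rm d}\tau = 1,
\]
so that $\|\theta_\eps\|_{L^1(\RR^{d+1})}= 1$.

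For the $L^2$ bound in item (1), I would apply Young's convolution inequality in the form $\|f \ast g\|_{L^2} \le \|f\|_{L^2}\|g\|_{L^1}$ to obtain
\[
\|S_\eps(\phi)\|_{L^2(\RR^{d+1})} \le \|\phi\|_{L^2(\RR^{d+1})}\, \|\theta_\eps\|_{L^1(\RR^{d+1})} = \|\phi\|_{L^2(\RR^{d+1})},
\]
and restricting to $\Omega_T$ yields the claim (with $C=1$, in fact).

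For the derivative bounds in item (2), I would move the spatial derivatives onto $\theta_\eps$: after the substitution $y \mapsto x-y$, one can write $\partial_{x_i} S_\eps(\phi) = \phi \ast \partial_{x_i}\theta_\eps$ and similarly $\partial_{x_i}\partial_{x_j} S_\eps(\phi) = \phi \ast \partial_{x_i}\partial_{x_j}\theta_\eps$. The scaling of $\theta_\eps$ gives
\[
\|\nabla_x \theta_\eps\|_{L^1(\RR^{d+1})} = \tfrac{1}{\eps}\,\|\nabla_z \theta\|_{L^1(\RR^{d+1})}, \qquad \|\nabla^2_x \theta_\eps\|_{L^1(\RR^{d+1})} = \tfrac{1}{\eps^2}\,\|\nabla^2_z \theta\|_{L^1(\RR^{d+1})},
\]
as each $x$-derivative brings out a factor of $1/\eps$ from the rescaling argument of $\theta$, and the change of variables above leaves the $L^1$ norm of the derivatives of $\theta$ finite (and independent of $\eps$) by smoothness and compact support of $\theta$ in $\mathcal{K}$. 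Applying Young's inequality once more gives
\[
\|\nabla S_\eps(\phi)\|_{L^2} \le \tfrac{C}{\eps}\|\phi\|_{L^2}, \qquad \|\nabla^2 S_\eps(\phi)\|_{L^2} \le \tfrac{C}{\eps^2}\|\phi\|_{L^2},
\]
which after multiplication by $\eps$ and $\eps^2$ respectively yields (2).

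There is no serious obstacle here; this is a standard mollifier estimate. The only care needed is to use the compact support hypothesis on $\phi$ to justify zero-extension to $\RR^{d+1}$ so that the convolution representation and Young's inequality apply globally, and to track the scaling constants coming from the anisotropic rescaling $(s,y) \mapsto (s/\eps^2, y/\eps)$ carefully so that spatial derivatives produce factors of $1/\eps$ (rather than $1/\eps^2$, which a naive reading of the definition might suggest).
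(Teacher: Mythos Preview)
Your argument is correct: writing $S_\eps(\phi)=\phi\ast\theta_\eps$ after zero-extension, computing $\|\theta_\eps\|_{L^1}=1$ by the parabolic change of variables, and applying Young's inequality (and, for item~(2), differentiating the kernel and tracking the $1/\eps$ factors from the spatial rescaling) gives exactly the stated bounds.

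Note, however, that the paper does not supply its own proof of this lemma. The Appendix merely records the statement; the smoothing operator $S_\eps$ and its properties are explicitly borrowed from Geng--Shen \cite[Section~3]{geng2017convergence}, and the lemma is quoted from there without argument. So there is no ``paper's proof'' to compare against --- your proof fills in precisely the routine mollifier computation that the paper takes for granted by citation.
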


\begin{lemma}\label{smooth: error}
For any function $f \in \mathrm{L}^{2}(\RR;\mathrm{H}^{2}(\RR^d))$, we have the following
\[
|| \nabla S_{\eps}(f) - \nabla f ||_{\mathrm{L}^2(\RR^{d+1})} \leq C \eps \left\{ || \nabla^2 f ||_{\mathrm{L}^2(\RR^{d+1})} + || \partial_t f ||_{\mathrm{L}^2(\RR^{d+1})} \right\}
\]
where the constant $C$ depends upon dimension $d$.
\end{lemma}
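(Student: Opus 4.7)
The plan is to exploit that since $S_{\eps}$ is a convolution, spatial derivatives commute with it, so $\nabla S_{\eps}(f)=S_{\eps}(\nabla f)$. After the harmless change of variables $s=\eps^{2}\tau$, $y=\eps z$ in the definition \eqref{eq:smoothing-op}, the object of interest becomes
\[
S_{\eps}(\nabla f)(t,x)-\nabla f(t,x)=\int_{\RR^{d+1}}\theta(\tau,z)\,\big[\nabla f(t-\eps^{2}\tau,x-\eps z)-\nabla f(t,x)\big]\,d\tau\,dz.
\]
I would then split the bracket into a spatial-displacement piece $\nabla f(t-\eps^{2}\tau,x-\eps z)-\nabla f(t-\eps^{2}\tau,x)$ and a time-displacement piece $\nabla f(t-\eps^{2}\tau,x)-\nabla f(t,x)$, and estimate the $L^{2}(\RR^{d+1})$ norm of each contribution separately.

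The spatial-displacement piece reduces to the standard mollification estimate. The fundamental theorem of calculus in the spatial variable rewrites it as $-\eps\int_{0}^{1}\nabla^{2}f(t-\eps^{2}\tau,x-r\eps z)\cdot z\,dr$, and Minkowski's integral inequality combined with the translation invariance of the $L^{2}$ norm gives a bound of the form $C\eps\,\|\nabla^{2}f\|_{L^{2}(\RR^{d+1})}\int|z|\,|\theta(\tau,z)|\,d\tau\,dz$, which is $C\eps\,\|\nabla^{2}f\|_{L^{2}(\RR^{d+1})}$ since $\theta$ has compact support.

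The time-displacement piece is the crux and the only real obstacle. A direct use of the fundamental theorem of calculus in $t$ would produce a factor of $\eps^{2}|\tau|\,\nabla\partial_{t}f$, and the hypothesis $f\in L^{2}(\RR;H^{2}(\RR^{d}))$ supplemented by $\partial_{t}f\in L^{2}(\RR^{d+1})$ does not control $\nabla\partial_{t}f$ in $L^{2}$. The remedy is to first integrate by parts in $z$ against $\theta$: using the identity $\nabla_{x}\big[f(t-\eps^{2}\tau,x-\eps z)-f(t,x-\eps z)\big]=-\eps^{-1}\nabla_{z}\big[f(t-\eps^{2}\tau,x-\eps z)-f(t,x-\eps z)\big]$, one transfers the gradient from $f$ onto $\theta$ at the cost of an $\eps^{-1}$ factor, reducing the piece to
\[
\eps^{-1}\int_{\RR^{d+1}}\nabla_{z}\theta(\tau,z)\,\big[f(t-\eps^{2}\tau,x-\eps z)-f(t,x-\eps z)\big]\,d\tau\,dz.
\]
Now the fundamental theorem of calculus in $t$ inside the bracket yields a pointwise bound by $\eps^{2}|\tau|\,\partial_{t}f$, and Minkowski's inequality produces a contribution of $C\eps\,\|\partial_{t}f\|_{L^{2}(\RR^{d+1})}$, with $C=\int|\tau|\,|\nabla_{z}\theta(\tau,z)|\,d\tau\,dz<\infty$ because $\theta\in C^{\infty}_{c}$. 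Summing the two contributions proves the lemma. As a sanity check, the same conclusion also follows by Plancherel from the pointwise Fourier bound $|\xi|^{2}|\hat{\theta}(\eps^{2}\omega,\eps\xi)-1|^{2}\le C\eps^{2}(|\xi|^{4}+|\omega|^{2})$, itself established by Taylor expanding $\hat{\theta}$ near the origin and using the trivial bound $|\hat{\theta}|\le 1$ away from it.
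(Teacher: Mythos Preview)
The paper does not supply a proof of this lemma; it is recorded in the Appendix without argument, as a known estimate borrowed from \cite{geng2017convergence}. Your proof must therefore be judged on its own, and it is essentially correct, with one presentational slip worth fixing.

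The slip is in the splitting. You define the time-displacement piece as $\nabla f(t-\eps^{2}\tau,x)-\nabla f(t,x)$, which carries no $z$-dependence; yet the integration-by-parts identity you then invoke, $\nabla_{x}[\,\cdot\,]=-\eps^{-1}\nabla_{z}[\,\cdot\,]$, is written for the expression $\nabla f(t-\eps^{2}\tau,x-\eps z)-\nabla f(t,x-\eps z)$, evaluated at the \emph{shifted} spatial point. As written, the time piece cannot be integrated by parts in $z$. The remedy is simply to reverse the order of the split: take the spatial-displacement piece at the unshifted time, $\nabla f(t,x-\eps z)-\nabla f(t,x)$, and the time-displacement piece at the shifted spatial point, $\nabla f(t-\eps^{2}\tau,x-\eps z)-\nabla f(t,x-\eps z)$. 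With this ordering both of your estimates go through verbatim: the spatial piece via the fundamental theorem of calculus in $x$ and Minkowski, and the temporal piece via the $\nabla_{x}\mapsto\nabla_{z}$ transfer onto $\theta$ followed by the fundamental theorem of calculus in $t$, yielding the factor $\eps^{-1}\cdot\eps^{2}=\eps$ in front of $\|\partial_{t}f\|_{L^{2}}$.

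Your Fourier-side sanity check is also valid; the pointwise bound $|\xi|^{2}\,|\hat\theta(\eps^{2}\omega,\eps\xi)-1|^{2}\le C\eps^{2}(|\xi|^{4}+|\omega|^{2})$ holds by treating separately the regimes $\eps|\xi|\ge 1$, $\eps^{2}|\omega|\ge 1$ (where $|\hat\theta-1|\le 2$ suffices), and the remaining near-origin regime where the first-order Taylor bound $|\hat\theta(a,b)-1|\le C(|a|+|b|)$ applies together with $\eps^{2}|\xi|^{2}\le 1$.
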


\begin{lemma}\label{smooth: extension}
For $Y$-periodic function $g$ and $f \in \mathrm{L}^{p}(\Omega_T)$ consider any subset $\omega \subset \subset \Omega$ and $0<s<\tau<T$ then we have the following
\begin{enumerate}
\item $\left\Vert g\xeps S_{\eps}(f) \right\Vert_{\mathrm{L}^p((s,\tau)\times \omega)} \leq C ||g||_{\mathrm{L}^p(Y)} || f ||_{\mathrm{L}^p((s-\eps^2, \tau + \eps^2)\times\omega(\eps))}$
\item $\eps\left\Vert g\xeps \nabla S_{\eps}(f) \right\Vert_{\mathrm{L}^p((s,\tau)\times \omega)} \leq C ||g||_{\mathrm{L}^p(Y)} || f ||_{\mathrm{L}^p((s-\eps^2, \tau + \eps^2)\times\omega(\eps))}$
\end{enumerate}
where $\omega(\eps):=\{x\in \Omega: \dist(x,\omega)<\eps\}$, $0<\eps <\min\{s,\tau\}$ and the constant $C$ depends upon dimension $d$.
\end{lemma}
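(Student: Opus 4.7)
The plan is to prove (i) and (ii) in parallel using the following strategy: first establish a pointwise bound on $|S_\eps(f)(t,x)|^p$ (resp.\ on $\eps^p |\nabla_x S_\eps(f)(t,x)|^p$) that is \emph{uniform} in $x$ over each $\eps$-cube $Q_k^\eps := \eps k + \eps Y$, and then extract the periodic oscillation $|g(\cdot/\eps)|^p$ cube by cube using only a single cell integral of $|g|^p$. The first ingredient is Jensen's inequality: since $\theta_\eps(s,y) := \eps^{-(d+2)} \theta(s/\eps^2, y/\eps)$ is a probability density supported in $\{|s| \le \eps^2,\, |y| \le \eps\}$ with $\|\theta_\eps\|_\infty \le C \eps^{-(d+2)}$, Jensen gives
\[
|S_\eps(f)(t,x)|^p \le S_\eps(|f|^p)(t,x) \le C \eps^{-(d+2)} \int_{t-\eps^2}^{t+\eps^2} \int_{B_\eps(x)} |f(r,z)|^p \, dz \, dr.
\]
For the gradient case I would write $\nabla_x S_\eps(f) = \eps^{-1} \tilde S_\eps(f)$ with kernel $\tilde\theta_\eps(s,y) := \eps^{-(d+2)} (\nabla_y \theta)(s/\eps^2, y/\eps)$; this kernel is signed so Jensen fails, but H\"older's inequality (with $\|\tilde\theta_\eps\|_{\mathrm{L}^{p'}} \le C \eps^{-(d+2)/p}$) produces exactly the same bound for $|\eps \nabla_x S_\eps(f)(t,x)|^p$, the factor $\eps$ on the left of (ii) being precisely what cancels the extra $\eps^{-1}$ from differentiation.

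Next, I would tile $\omega$ by an essentially disjoint family $\{Q_k^\eps\}_{k \in K}$ of $\eps$-cubes. For $x \in Q_k^\eps$ the ball $B_\eps(x)$ lies inside an $\eps$-enlargement $\tilde Q_k^\eps$ of $Q_k^\eps$, so the right-hand side above is majorised by the $x$-independent quantity
\[
M_k^\eps(t) := C \eps^{-(d+2)} \int_{t-\eps^2}^{t+\eps^2} \int_{\tilde Q_k^\eps} |f(r,z)|^p \, dz \, dr.
\]
By $Y$-periodicity of $g$ and the change of variables $x = \eps(k + z)$,
\[
\int_{Q_k^\eps} |g(x/\eps)|^p \, dx = \eps^d \|g\|_{\mathrm{L}^p(Y)}^p,
\]
hence $\int_{Q_k^\eps} |g(x/\eps)|^p |S_\eps(f)(t,x)|^p \, dx \le \eps^d \|g\|_{\mathrm{L}^p(Y)}^p M_k^\eps(t)$, and identically for the gradient version. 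Summing over $k \in K$ with $Q_k^\eps \cap \omega \ne \emptyset$, using the dimension-bounded overlap of $\{\tilde Q_k^\eps\}_k$ together with $\bigcup_k \tilde Q_k^\eps \subset \omega(C\eps)$, and finally integrating in $t \in (s,\tau)$ and swapping orders (the set $\{t \in (s,\tau) : |t - r| \le \eps^2\}$ has one-dimensional measure at most $2\eps^2$) yields
\[
\|g(\cdot/\eps) S_\eps(f)\|_{\mathrm{L}^p((s,\tau) \times \omega)}^p \le C \|g\|_{\mathrm{L}^p(Y)}^p \|f\|_{\mathrm{L}^p((s - \eps^2, \tau + \eps^2) \times \omega(C\eps))}^p,
\]
and the analogous estimate with $\eps \nabla_x S_\eps(f)$ in place of $S_\eps(f)$. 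Both (i) and (ii) follow after absorbing the dimensional constant $C$ into the definition of $\omega(\eps)$.

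The hard part will be purely geometric and rather mild: constructing the $\eps$-cube decomposition of $\omega$, verifying the dimension-dependent overlap bound for the enlargements $\tilde Q_k^\eps$, and identifying $\bigcup_k \tilde Q_k^\eps$ with $\omega(C\eps)$ for a dimensional constant $C$ so as to recover the stated $\omega(\eps)$. No deeper ingredient (interpolation, Calder\'on--Zygmund theory, or a second-generation smoothing argument) is required; the whole statement is essentially a consequence of Jensen/H\"older, $Y$-periodicity, and Fubini.
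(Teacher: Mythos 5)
The paper itself offers no proof of this lemma; it is quoted verbatim from Geng--Shen \cite{geng2017convergence} (Lemma~3.9 there), so there is nothing to compare against line by line. Your proposal is the standard and essentially correct route: Jensen for the nonnegative kernel, H\"older (with $\|\tilde\theta_\eps\|_{\mathrm L^{p'}}\le C\eps^{-(d+2)/p}$) for the signed differentiated kernel, and a cube decomposition to decouple the oscillation of $g$ from $f$ by periodicity, followed by a finite-overlap count and a Fubini argument in time. The pointwise bound $|S_\eps f|^p\le C\eps^{-(d+2)}\iint_{|s'|<\eps^2,\,|y|<\eps}|f(t-s',x-y)|^p$ and its gradient analogue are correct, and the time bookkeeping (each $r$ is hit by a $t$-interval of length at most $2\eps^2$) is exactly what cancels the $\eps^{-2}$.

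There is one point you should not slide over. Your cube decomposition produces the enlarged domain $\omega(C_d\eps)$ with $C_d\sim 1+\sqrt d$, whereas the lemma asserts $\omega(\eps)$, and one cannot ``absorb a constant into the definition'' of a fixed set --- $\omega(\eps)$ in the statement is $\{x:\operatorname{dist}(x,\omega)<\eps\}$, full stop. This is not a fatal gap: the paper only ever uses this lemma on sets of the form $\Omega_{T,\delta}$ at scales $\delta\sim\sqrt\eps$, so replacing $\omega(\eps)$ by $\omega(C\eps)$ everywhere is harmless. If you want to recover $\omega(\eps)$ exactly, the cube decomposition is the culprit; instead, after the Jensen/H\"older step write
\[
\int_\omega \left|g\!\left(\tfrac{x}{\eps}\right)\right|^p\!|S_\eps f(t,x)|^p\,dx
\le C\eps^{-(d+2)}\!\int_{t-\eps^2}^{t+\eps^2}\!\!\int_{\omega(\eps)}\!|f(r,z)|^p
\left(\int_{B_\eps(z)}\left|g\!\left(\tfrac{x}{\eps}\right)\right|^p dx\right) dz\,dr,
\]
which comes from swapping the $x$ and $z$ integrals and using $B_\eps(x)\subset\omega(\eps)$ for $x\in\omega$. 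The inner $x$-integral is over a ball of radius $\eps$, and rescaling by $\eps$ plus $Y$-periodicity bounds it by $C_d\,\eps^d\|g\|_{\mathrm L^p(Y_1)}^p$ directly, with $Y_1$ an enlarged unit cell. This avoids the cube tiling, gives $\omega(\eps)$ on the nose, and also explains where the (otherwise undefined) symbol $Y_1$ in the statement comes from. Either route is fine; if you keep the cubes, restate the conclusion with $\omega(C\eps)$.
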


\begin{lemma}\label{lemma: quotient}
For $g \in \mathrm H^1(\Omega) \cap \mathrm L^{\infty}(\Omega)$ satisfying the following bound
\[
0 < a \leq g(x) \leq b \qquad \text{ for a.e. } x \in \Omega
\]
we get $\frac{1}{g} \in \mathrm H^1(\Omega) \cap \mathrm L^{\infty}(\Omega) $ and 
\[
\nabla \left[\frac{1}{g}\right] = - \frac{1}{g^2} \nabla g
\]
\end{lemma}

\bibliographystyle{plain}
\begin{singlespace}
\bibliography{Quant-Para}

\begin{thebibliography}{10}

\bibitem{allaire1992homogenization}
Gr{\'e}goire Allaire.
\newblock Homogenization and two-scale convergence.
\newblock {\em SIAM Journal on Mathematical Analysis}, 23(6):1482--1518, 1992.

\bibitem{allaire1999boundary}
Gr{\'e}goire Allaire and Micol Amar.
\newblock Boundary layer tails in periodic homogenization.
\newblock {\em ESAIM: Control, Optimisation and Calculus of Variations},
  4:209--243, 1999.

\bibitem{allaire2004homogenization}
Gr{\'e}goire Allaire, Yves Capdeboscq, Andrey Piatnitski, Vincent Siess, and
  Muthusamy Vanninathan.
\newblock Homogenization of periodic systems with large potentials.
\newblock {\em Archive for Rational Mechanics and Analysis}, 174:179--220,
  2004.

\bibitem{allaire2007homogenization}
Gr{\'e}goire Allaire and Rafael Orive.
\newblock Homogenization of periodic non self-adjoint problemswith large drift
  and potential.
\newblock {\em ESAIM: Control, Optimisation and Calculus of Variations},
  13(4):735--749, 2007.

\bibitem{allaire2012homogenization}
Gr{\'e}goire Allaire, Irina Pankratova, and Andrey Piatnitski.
\newblock Homogenization and concentration for a diffusion equation with large
  convection in a bounded domain.
\newblock {\em Journal of Functional Analysis}, 262(1):300--330, 2012.

\bibitem{allaireraphael2007homogenization}
Gr{\'e}goire Allaire and Anne-Lise Raphael.
\newblock Homogenization of a convection--diffusion model with reaction in a
  porous medium.
\newblock {\em Comptes Rendus Mathematique}, 344(8):523--528, 2007.

\bibitem{avellaneda1989compactness}
Marco Avellaneda and Fang-Hua Lin.
\newblock Compactness methods in the theory of homogenization ii: Equations in
  non-divergence form.
\newblock {\em Communications on pure and applied mathematics}, 42(2):139--172,
  1989.

\bibitem{bensoussan2011asymptotic}
Alain Bensoussan, Jacques-Louis Lions, and George Papanicolaou.
\newblock {\em Asymptotic analysis for periodic structures}, volume 374.
\newblock American Mathematical Soc., 2011.

\bibitem{capdeboscq1998homogenization}
Yves Capdeboscq.
\newblock Homogenization of a diffusion equation with drift.
\newblock {\em Comptes Rendus de l'Acad{\'e}mie des Sciences-Series
  I-Mathematics}, 327(9):807--812, 1998.

\bibitem{MR1796243}
Yves Capdeboscq.
\newblock Homogenization of a neutronic multigroup evolution model.
\newblock {\em Asymptot. Anal.}, 24(2):143--165, 2000.

\bibitem{capdeboscq2020finite}
Yves Capdeboscq, Timo Sprekeler, and Endre S{\"u}li.
\newblock Finite element approximation of elliptic homogenization problems in
  nondivergence-form.
\newblock {\em ESAIM: Mathematical Modelling and Numerical Analysis},
  54(4):1221--1257, 2020.

\bibitem{cioranescu1999introduction}
Doina Cioranescu and Patrizia Donato.
\newblock {\em An introduction to homogenization}.
\newblock Oxford university press, 1999.

\bibitem{donato2005averaging}
Patrizia Donato and Andrey Piatnitski.
\newblock Averaging of nonstationary parabolic operators with large lower order
  terms.
\newblock {\em Multi Scale Problems and Asymptotic Analysis, GAKUTO Internat.
  Ser. Math. Sci. Appl}, 24:153--165, 2005.

\bibitem{geng2017convergence}
Jun Geng and Zhongwei Shen.
\newblock Convergence rates in parabolic homogenization with time-dependent
  periodic coefficients.
\newblock {\em Journal of Functional Analysis}, 272(5):2092--2113, 2017.

\bibitem{jing2023periodic}
Wenjia Jing and Yiping Zhang.
\newblock On the periodic homogenization of elliptic equations in nondivergence
  form with large drifts.
\newblock {\em Multiscale Modeling \& Simulation}, 21(4):1486--1501, 2023.

\bibitem{shen2018periodic}
Zhongwei Shen.
\newblock {\em Periodic homogenization of elliptic systems}.
\newblock Springer, 2018.

\bibitem{zhang2021estimates}
Yiping Zhang.
\newblock Estimates of eigenvalues and eigenfunctions in elliptic
  homogenization with rapidly oscillating potentials.
\newblock {\em Journal of Differential Equations}, 292:388--415, 2021.

\end{thebibliography}
\end{singlespace}

\end{document}